\newtheorem{theorem}{Theorem}
\newtheorem{proposition}{Proposition}
\newtheorem{lemma}{Lemma}
\newtheorem*{corollary}{Corollary}
\newcommand{\textcyr}[1]{\fontencoding{OT2}\fontfamily{cmr}\fontseries{m}\fontshape{n}\selectfont #1}
\newcommand{\Sha}{\mbox{\textcyr{Sh}}}
\begin{document}

\title{\huge Visibility of Ideal Classes}    
\author{Ren\'e Schoof \thanks{Dipartimento di Matematica, 2$^a$ Universit\`a di Roma ``Tor Vergata'',
I-00133 Roma, Italy, schoof@mat.uniroma2.it} \and
Lawrence C. Washington \thanks{Department of Mathematics, 
University of Maryland, 
College Park, MD 20742, lcw@math.umd.edu}}  
\maketitle
\begin{abstract} 
Cremona, Mazur, and others have studied what they call 
visibility of elements of Shafarevich-Tate groups of elliptic curves.
The analogue for an abelian number field $K$ is capitulation of
ideal classes of $K$ in the minimal cyclotomic field containing $K$.
We develop a new method to study capitulation and use it and 
classical methods to compute data
with the hope of gaining insight into the
elliptic curve case. For example, the numerical data for number fields
suggests that visibility of nontrivial
Shafarevich-Tate elements might be much more common for elliptic
curves of positive rank than for curves of rank 0. 
\end{abstract}
 
Let $E$ be an elliptic curve over $\mathbf  Q$ of conductor $N$. Then
there is a modular parametrization $X_0(N)\to E$ and a 
corresponding map $E\to J_0(N)$ of
$E$ into the jacobian of $X_0(N)$. This induces
a map of Shafarevich-Tate groups $\Sha  (E)\to \Sha  (J_0(N))$.
Cremona and Mazur \cite{cremona} 
study the question of when an element $s$ of $\Sha  (E)$ is in the kernel
of this map. When this happens, there is a curve defined over $\mathbf  Q$
and contained in $J_0(N)$ that represents $s$. In other words,
$s$ is ``visible'' in $J_0(N)$. Further work on this topic has been done
by Agashe, Stein, and others \cite{agashe}, \cite{agashe-stein}.

Let $K/\mathbf  Q$ be an abelian extension of conductor $n$, so 
$K\subseteq \mathbf  Q(\zeta_n)$, where $n$ is the conductor of $K$. 
This is the analogue
of the modular parametrization above. The ideal class
group is the analogue of the Shafarevich-Tate group (this will be made
more precise in Section 1), so the analogue of the above
question is to ask when ideal classes of $K$ become principal
in $\mathbf  Q(\zeta_n)$. 

Let $L/K$ be an extension of number fields. An ideal class of $K$ that
becomes principal in $L$ is said to {\it capitulate}. Many authors
have discussed capitulation in various contexts.
The hope of the present paper is to use results about capitulation
for $\mathbf  Q(\zeta_n)/K$ to gain some insight into the 
situation for Shafarevich-Tate groups.

For example, for imaginary quadratic fields $K$, the capitulation 
in $\mathbf  Q(\zeta_n)/K$
is mostly accounted for by the ambiguous classes, namely those
produced by genus theory, and such classes are easily seen to capitulate.
In examples of small discriminant, the class group consists
mostly of ambiguous classes, so capitulation is very common.
However, for large discriminants, most of the ideal class group
does not capitulate.

The situation for real quadratic fields is quite different.
The ambiguous classes capitulate, but
numerical data indicates that capitulation of additional
classes is very common. The situation for cyclic
cubic fields is similar.

Let's return to Shafarevich-Tate groups.
Cremona and Mazur looked at elliptic curves of 
conductor up to 5500. All of their examples of
non-trivial $\Sha  (E)$ had Mordell-Weil rank 0,
and capitulation (i.e., visibility)
was very common. The analogue of the Mordell-Weil
group is the unit group of the ring of integers of a number field.
An elliptic curve with Mordell-Weil rank 0 therefore corresponds
to an imaginary quadratic field (or $\mathbf  Q$). As pointed
out above, capitulation is very common for imaginary quadratic fields with small
discriminants. The numerical results
of Cremona and Mazur for elliptic curves match this situation. By analogy,
it is natural to ask the following questions.
Should we expect non-visibility to become the 
rule rather than the exception
as the conductor increases? Is there an analogue of genus theory
that accounts for most of the Shafarevich-Tate group
for small conductors? We do not know the answer to either
question. Both 
seem worth investigating.

Real quadratic fields (along with non-real cubics 
and totally complex quartics)
correspond to elliptic curves with Mordell-Weil rank 1.
Cyclic cubic fields correspond to elliptic curves of rank 2.
The results for real quadratic fields and cyclic cubic fields
suggest that, for elliptic curves
of positive rank, visibility should be very common. 
There is very little data available; again, this seems to be worth
investigating.

The Cohen-Lenstra heuristics \cite{cohen} predict that, for totally real
fields, the existence of units makes class numbers tend to be small.
Therefore, it is perhaps common that the class number of a totally
real abelian number field 
has approximately 
the same class number as the minimal real cyclotomic field containing it.
This tends to force capitulation of ideal classes (see part (i) of
Lemma 4 in Section 4).
Is there is an analogous situation for elliptic curves of positive rank?
Of course, there is a difference in this case between the number field
case and the elliptic curve case. When a number field is properly 
contained in a cyclotomic field, the rank
of the group of units of the number field is less than
that of the cyclotomic field (except when the number field is the real subfield of the cyclotomic field). There does not seem to be a reason
for an analogous situation for the Mordell-Weil
ranks of an elliptic curve and the corresponding $J_0(N)$.

\section{The analogy between ideal class groups and Shafarevich-Tate groups}

In this section we review and make explicit some analogies between
ideal class groups and Shafarevich-Tate groups.

Let $E$ be an elliptic curve defined over $\mathbf  Q$. The Shafarevich-Tate
group is defined to be the group of everywhere locally
trivial elements of $H^1(G_{\mathbf  Q}, E(\overline{\mathbf  Q}))$, namely
$$
\Sha  (E)=\mbox{\it Ker}\bigg(H^1(G_{\mathbf  Q},E(\overline{\mathbf  Q}))\to \prod_{p\le \infty}
H^1(G_{\mathbf  Q_p}, E(\overline{\mathbf  Q}_p))\bigg).
$$
Here $G_L=\mbox{Gal}(\overline{L}/L)$ for any field $L$, 
and embeddings $\overline{\mathbf  Q}
\hookrightarrow \overline{\mathbf  Q}_p$ are implicitly fixed.

Now let's consider the number field case. Let $K$ be a number field. 
Since units are the
analogue of points on an elliptic curve, we let $E$ denote
the group of all units in the ring of integers of $\overline{\mathbf  Q}$.
For a place $\mathfrak p$ of $K$, let $K_{\mathfrak p}$ be the completion 
at $\mathfrak p$. When $\mathfrak p$ is finite, let $p$ denote the rational
prime below $\mathfrak p$
and let $U_p$ be the group of units in the integral closure
of $\mathbf  Z_p$ in $\overline{\mathbf  Q}_p$.
When $\mathfrak p$ is archimedean, let $U_p=\mathbf  C^{\times}$.  Let $C_K$ denote the ideal
class group of $K$. 
The following result has been implicit in the literature
(for example, apply the techniques of
\cite[pp. 98-99]{milne} to the inclusion
$j$: Spec $K$ $\to$ Spec $O_K$ for the \'etale sheaf $G_m$)
and also appears in \cite{flach} and \cite[Lemma 6.1]{gonzalez}.
Since we need it and its proof later, 
we include it.
It shows the strong analogy between ideal class groups
and Shafarevich-Tate groups. The cohomology groups are the standard
profinite cohomology groups defined using continuous cocycles.

\begin{proposition} There is an isomorphism
$$ C_K\simeq \mbox{Ker}\bigg(H^1(G_K,E)\to \prod_{\mathfrak p}
H^1(G_{K_{\mathfrak p}}, U_p)\bigg).
$$
\end{proposition}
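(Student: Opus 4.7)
The plan is to derive the isomorphism from the $G_K$-cohomology of the short exact sequence of $G_K$-modules
$$1 \to E \to \overline{\mathbf  Q}^\times \to J \to 1,$$
where $J = \overline{\mathbf  Q}^\times/E$ is identified with the direct limit $\varinjlim_L I_L$ of fractional ideal groups over number fields $L \subset \overline{\mathbf  Q}$. That this really is a short exact sequence of $G_K$-modules uses the capitulation theorem: every ideal of $O_L$ becomes principal in the Hilbert class field of $L$, so in the colimit every fractional ideal is principal.

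First, I would take $G_K$-cohomology. Invoking Hilbert 90 in the form $H^1(G_K,\overline{\mathbf  Q}^\times)=0$, and identifying $E^{G_K}=O_K^\times$ and $(\overline{\mathbf  Q}^\times)^{G_K}=K^\times$, one obtains the four-term exact sequence
$$0 \to O_K^\times \to K^\times \to J^{G_K} \to H^1(G_K,E) \to 0.$$
The image of $K^\times$ in $J^{G_K}$ is the subgroup $P_K$ of principal fractional ideals of $K$, so $H^1(G_K,E) \cong J^{G_K}/P_K$.

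Next I would run the analogous computation locally. For a finite place $\mathfrak p$ of $K$, the exact sequence $1\to U_p\to \overline{\mathbf  Q}_p^\times\to \mathbf  Q \to 1$, with the quotient being the divisible value group of $\overline{\mathbf  Q}_p$, combined with Hilbert 90 gives $H^1(G_{K_\mathfrak p},U_p)\cong \mathbf  Q/\mathbf  Z$; a comparison of the global and local sequences then shows the map $H^1(G_K,E) \to H^1(G_{K_\mathfrak p},U_p)$ is induced by the $\mathfrak p$-adic valuation $v_\mathfrak p\colon J^{G_K}\to\mathbf  Q$. For archimedean $\mathfrak p$, Hilbert 90 gives $H^1(G_{K_\mathfrak p},U_p)=0$, so no local obstruction is imposed.

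Finally I would identify the kernel. A class $[\mathfrak a]\in J^{G_K}/P_K$ lies in the kernel iff $v_\mathfrak p(\mathfrak a)\in\mathbf  Z$ for every finite $\mathfrak p$. Representing an invariant divisor by some $\mathfrak a_L\in I_L$ with $L/K$ finite Galois, invariance forces $\mathfrak a_L=\sum_\mathfrak p a_\mathfrak p\bigl(\sum_{\mathfrak p'\mid\mathfrak p}\mathfrak p'\bigr)$, and the normalized valuation satisfies $v_\mathfrak p(\mathfrak a)=a_\mathfrak p/e(\mathfrak p'/\mathfrak p)$. Integrality at every $\mathfrak p$ forces $e(\mathfrak p'/\mathfrak p)\mid a_\mathfrak p$, so $\mathfrak a_L$ is the extension of an ideal of $O_K$; this identifies the locally-integral part of $J^{G_K}$ with $I_K$ and hence the kernel with $I_K/P_K=C_K$. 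The main subtlety lies in this last step: $J^{G_K}$ is strictly larger than (the image of) $I_K$ because ramified extensions admit representatives with non-integral normalized valuations (morally, ``$\tfrac12[\mathfrak p]$'' becomes representable when $\mathfrak p$ ramifies), so care is needed in the bookkeeping between integer-valued and $\mathbf  Q$-valued valuations.
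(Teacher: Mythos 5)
Your argument is correct, but it reaches the result by a genuinely different (more structural) route than the paper. The paper constructs the map $C_K\to H^1(G_K,E)$ by hand: given an ideal $I$, it writes $I=(\beta)$ in a finite extension and takes the cocycle $\sigma\mapsto\beta^{\sigma-1}$; injectivity, local triviality, and surjectivity are then checked directly, with Hilbert 90 and the congruence $v_{\mathfrak p}(\alpha)\equiv 0\pmod m$ doing the work in the surjectivity step. Your proof is the derived-functor version of the same computation: the sequence $1\to E\to\overline{\mathbf Q}^{\times}\to J\to 1$ and its local analogue, the connecting map $J^{G_K}/P_K\to H^1(G_K,E)$ being exactly the paper's $c_I$ once unwound (the paper itself gestures at this route in its remark about applying Milne's argument for the \'etale sheaf $\mathbf G_m$). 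What your version buys is that surjectivity onto the locally trivial part falls out of exactness together with the computation $J^{G_K}\simeq\bigoplus_{\mathfrak p}\mathbf Q$ and $H^1(G_{K_{\mathfrak p}},U_p)\simeq\mathbf Q/\mathbf Z$; you correctly isolate the one real subtlety, namely that $J^{G_K}$ strictly contains $I_K$ because of ramification, so that integrality of the normalized valuations is precisely what cuts out $I_K$, and hence the kernel is $I_K/P_K=C_K$. What the paper's version buys is the explicit cocycle $c_I$, which it reuses later (in the Corollary and in Proposition 2). Two small points to tighten: (i) invoking the principal ideal theorem for the Hilbert class field to see that every element of $\varinjlim_L I_L$ is principal is overkill --- since $I^{h_L}=(\alpha)$ is principal, $I$ already capitulates in $L(\alpha^{1/h_L})$, which is the elementary fact the paper uses; (ii) to represent a $G_K$-invariant element of the colimit by a $\mathrm{Gal}(L/K)$-invariant ideal of some finite Galois $L/K$, you should note that the transition maps $I_L\to I_{L'}$ are injective, so that $(\varinjlim_L I_L)^{G_K}=\varinjlim_L I_L^{\mathrm{Gal}(L/K)}$.
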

\begin{proof} The product is over the places of $K$.
Since $H^1(G_{\mathbf  R}, \mathbf  C^{\times})=0$ and 
$H^1(G_{\mathbf  C}, \mathbf  C^{\times})=0$,
the archimedean primes can be ignored in the statement of the theorem and
in the following.

 Let $I$ be an ideal of $K$. Then $I$ becomes principal
in some extension of $K$, so $I=(\beta)$ for some $\beta\in \overline{\mathbf  Q}$
(we use $I$ to denote the lift of $I$ to extension fields).
Define $c_I:\mbox{Gal}(\overline{\mathbf Q}/K)\to E$ by $\sigma\mapsto \beta^{\sigma-1}$. 
Since  $\sigma(I)=I$, we have $\beta^{\sigma-1}\in E$. It follows easily that
$c_I$ is a continuous cocycle. If also $I=(\beta_1)$, 
then $\epsilon=\beta/\beta_1\in E$, so the cocycle defined using $\beta_1$
differs from $c_I$ by the coboundary $\sigma\mapsto \epsilon^{\sigma-1}$.
Therefore the cohomology class of $c_I$ depends only on $I$.
In fact, $c_I$ depends only on the ideal class of $I$: if $a\in K^{\times}$
then 
$$
c_{aI}(\sigma)=(a\beta)^{\sigma-1}=\beta^{\sigma-1}=c_I(\sigma).
$$
Therefore we have a homomorphism $\phi:C_K\to H^1(G_K, E)$.

Suppose $c_I$ is a coboundary, which means there is 
some $\epsilon\in E$ such that
$\beta^{\sigma-1}=\epsilon^{\sigma-1}$ for all $\sigma$. This means that
$\beta/\epsilon\in K$, so
$I=(\beta/\epsilon)$ is principal in $K$. Therefore $\phi$ is injective.

We now show that $c_I$ is locally trivial. Fix a prime ideal $\mathfrak p$
of $K$. Choose an ideal $J$ in the ideal class of $I$ with $J$ prime to
$\mathfrak p$. Then $J=(\gamma)$ for some $\gamma\in \overline{\mathbf  Q}$,
and $\gamma\in U_p$ via the embedding of $\overline{\mathbf  Q}\hookrightarrow 
\overline{\mathbf  Q}_p$ induced by $\mathfrak p$. Therefore $c_J$ restricted to
$G_{K_{\mathfrak p}}$ is given by the coboundary $\sigma\mapsto \gamma^{\sigma-1}$,
so the cohomology class of $c_I=c_J$ is locally trivial.

Finally, we show $\phi$ is surjective. Let $c$ be a cocycle in $H^1(G_K,E)$ 
that is locally a coboundary.
Hilbert's Theorem 90 says that $H^1(G_K,\overline{\mathbf  Q}^{\times})=0$. The map
$E\to \overline{\mathbf  Q}^{\times}$ therefore sends $c$ to a coboundary, so 
$c(\sigma)=y^{\sigma-1}$ for some $y\in \overline{\mathbf  Q}^{\times}$.
Since $c$ is continuous, $c$ has finite order, hence $c^m$ 
is a coboundary for some $m$.
Therefore, $c^m(\sigma)=\epsilon^{\sigma-1}$ for some $\epsilon\in E$.
This implies that $y^m/\epsilon$ is fixed by all $\sigma$, hence is in $K$.
Let $\alpha=y^m/\epsilon$.

Let $\mathfrak p$ be a prime ideal of $K$. Since $c$ is a coboundary at $\mathfrak p$,
we have $c(\sigma)=u_{\mathfrak p}^{\sigma-1}$ for all $\sigma\in G_{K_{\mathfrak p}}$
for some $u_{\mathfrak p}\in U_p$. Therefore,
$y^{\sigma-1}=u_{\mathfrak p}^{\sigma-1}$ for all $\sigma\in G_{K_{\mathfrak p}}$,
hence $y/u_{\mathfrak p}\in K_{\mathfrak p}$. Therefore
$$
v_{\mathfrak p}(\alpha)=v_{\mathfrak p}(y^m)=v_{\mathfrak p}((y/u_{\mathfrak p})^m)\equiv 0\pmod m.
$$
Since this happens for all $\mathfrak p$, we must have $(\alpha)=I^m$ for some ideal $I$
of $K$. 

Let $\epsilon_1=\epsilon^{1/m}\in E$. Then
$I=(y/\epsilon_1)$ in some extension of $K$. It follows easily that the
cohomology class of $c_I$ equals the cohomology class of $c$. Therefore
$\phi$ is surjective. 
\end{proof}

Let $L/K$ be a finite Galois extension of number fields and let $E_L$
be the units of the ring of integers of $L$. Define the locally trivial cohomology
group to be
$$
H_{lt}^1(L/K, E_L)=\mbox{\it Ker}\bigg(H^1(\mbox{Gal}(L/K),E_L)\to \prod_{\mathfrak p}
H^1(\mbox{Gal}(L_{\mathfrak p}/K_{\mathfrak p}), U_{L_{\mathfrak p}})\bigg),
$$
where $L_{\mathfrak p}$ denotes the completion of $L$ at one of the primes of $L$ above
$\mathfrak p$ and $U_{L_{\mathfrak p}}$ is the group of local units in $L_{\mathfrak p}$.

The inclusion map $K\hookrightarrow L$ induces a map $C_K\to C_L$. The following 
result appears in \cite{schmithals}.

\begin{corollary} There is an isomorphism  
$$
\mbox{Ker}(C_K\to C_L)\simeq H_{lt}^1(L/K, E_L).
$$
\end{corollary}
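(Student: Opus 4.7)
The plan is to deduce the corollary from the proposition combined with the inflation–restriction sequence, applied both globally (for $G_K \supset G_L$) and locally (for $G_{K_{\mathfrak p}} \supset G_{L_{\mathfrak P}}$).

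First, I would set up the bridge to the proposition. Writing $\phi_K$ and $\phi_L$ for the isomorphisms of the proposition, I claim the diagram
$$
\begin{CD}
C_K @>>> C_L \\
@V\phi_KVV @VV\phi_LV \\
H^1(G_K,E) @>\mathrm{res}>> H^1(G_L,E)
\end{CD}
$$
commutes. This is essentially tautological from the construction of $\phi$: if $I=(\beta)$ in $\overline{\mathbf Q}$, then the same $\beta$ shows that $IO_L$, viewed inside $\overline{\mathbf Q}$, still equals $(\beta)$, so $c_{IO_L}$ is literally the restriction of $c_I$ to $G_L$. Since $\phi_K$ is injective, $\mathrm{Ker}(C_K\to C_L)$ is identified with the intersection of $\phi_K(C_K)$ with $\mathrm{Ker}(\mathrm{res})$ inside $H^1(G_K,E)$.

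Next, I would invoke inflation–restriction
$$
0\to H^1(\mathrm{Gal}(L/K),E_L)\xrightarrow{\mathrm{inf}} H^1(G_K,E)\xrightarrow{\mathrm{res}} H^1(G_L,E),
$$
which identifies $\mathrm{Ker}(\mathrm{res})$ with (the image of) $H^1(\mathrm{Gal}(L/K),E_L)$. Combining with the previous step, I get an injective map from $\mathrm{Ker}(C_K\to C_L)$ into $H^1(\mathrm{Gal}(L/K),E_L)$, and the only remaining task is to identify its image with $H_{lt}^1(L/K,E_L)$.

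This last identification is the step requiring the most care, since the word ``locally trivial'' means two different things on the two sides. For each place $\mathfrak p$ of $K$ and a choice of prime $\mathfrak P\mid\mathfrak p$ of $L$, I would use the naturality square
$$
\begin{CD}
H^1(\mathrm{Gal}(L/K),E_L) @>>> H^1(\mathrm{Gal}(L_{\mathfrak P}/K_{\mathfrak p}),U_{L_{\mathfrak P}})\\
@V\mathrm{inf}VV @VV\mathrm{inf}V\\
H^1(G_K,E) @>>> H^1(G_{K_{\mathfrak p}},U_p)
\end{CD}
$$
and observe that the right-hand vertical inflation is injective by local inflation–restriction applied to $G_{K_{\mathfrak p}}\supset G_{L_{\mathfrak P}}$. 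Hence a class in $H^1(\mathrm{Gal}(L/K),E_L)$ dies in $H^1(\mathrm{Gal}(L_{\mathfrak P}/K_{\mathfrak p}),U_{L_{\mathfrak P}})$ if and only if its inflation dies in $H^1(G_{K_{\mathfrak p}},U_p)$. Running this over all $\mathfrak p$ shows that inflation matches $H_{lt}^1(L/K,E_L)$ exactly with $\mathrm{inf}(H^1(\mathrm{Gal}(L/K),E_L))\cap\phi_K(C_K)$, completing the proof.

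The main obstacle is the last step: one has to be honest about what ``locally trivial'' means on each side and use the injectivity of the local inflation map to reconcile the two. Everything else is formal manipulation of inflation–restriction together with the proposition.
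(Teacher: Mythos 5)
Your proposal is correct and follows essentially the same route as the paper: Proposition 1 plus the global inflation--restriction sequence to identify $\mbox{Ker}(C_K\to C_L)$ with the locally trivial classes in $H^1(G_K,E)$ killed by restriction, and then a reconciliation of the two notions of local triviality. The only (cosmetic) difference is that you handle that last step by invoking injectivity of the local inflation map $H^1(\mbox{Gal}(L_{\mathfrak P}/K_{\mathfrak p}),U_{L_{\mathfrak P}})\to H^1(G_{K_{\mathfrak p}},U_p)$, whereas the paper proves the same fact by the explicit cocycle computation showing that the local trivializing unit $u_{\mathfrak p}$ is fixed by $G_{L_{\mathfrak p}}$ and hence already lies in $U_{L_{\mathfrak p}}$.
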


\begin{proof} The beginning of the inflation-restriction exact sequence is
$$
0\to H^1(L/K, E_L)\to H^1(G_K, E)\to H^1(G_L, E).
$$
An element $x\in H_{lt}^1(L/K, E_L)$ clearly yields an element of $H^1(G_K, E)$ that is
locally trivial, hence corresponds to an element $y\in C_K$. The map from
$H^1(G_K, E)$ to $H^1(G_L, E)$, when restricted to locally trivial elements, 
is easily seen to correspond to the map on class groups.
Since $x$ is 0 in $H^1(G_L, E)$, it follows that $y$ is 0 in $C_L$.
Therefore we have a map $\psi: H_{lt}^1(L/K, E_L)\to \mbox{\it Ker}(C_K\to C_L)$.
The injectivity of $\psi$ is immediate from the injectivity on the left
in the inflation-restriction sequence. 

It remains to show that $\psi$ is surjective. An element of $\mbox{\it Ker}(C_K\to C_L)$
corresponds to a cocycle $c$ whose class in $H^1(L/K,E_L)$  is locally trivial when 
regarded as an element $\tilde{c}\in H^1(G_K,E)$. We must show that $c$ is locally trivial
in $H^1(L/K,E_L)$. Since $\tilde{c}$ is the inflation of $c$, we have $\tilde{c}(\sigma)=1$
for all $\sigma\in G_L$.
Let $\mathfrak p$ be a prime ideal
of $K$. The local triviality in $H^1(G_K,E)$ implies that there 
exists $u_{\mathfrak p}\in U_{\mathfrak p}$ such that
$\tilde{c}(\sigma)=u_{\mathfrak p}^{\sigma-1}$ for all $\sigma\in G_{K_{\mathfrak p}}$. 
Since $u_{\mathfrak p}^{\sigma-1} =\tilde{c}(\sigma)=1$ for all $\sigma\in G_L
\cap G_{K_{\mathfrak p}}=G_{L_{\mathfrak p}}$,
we have $u_{\mathfrak p}\in U_{L_{\mathfrak p}}$. This means that $c\in H_{lt}^1(L/K, E_L)$.
Therefore $\psi$ is surjective. 
\end{proof}

In the case of elliptic curves, the fundamental descent sequence
is
$$
0\to E(\mathbf  Q)/nE(\mathbf  Q) \to S_n \to \Sha  [n]\to 0, 
$$
where $S_n\subseteq H^1(G_{\mathbf  Q}, E[n])$ is the $n$-Selmer group.
There is an analogue for number fields. Recall that, for a number field $K$,
$$
H^1(G_K,\mu_n)\simeq K^{\times}/(K^{\times})^n.
$$
This follows easily from Hilbert's Theorem 90. 

\begin{proposition} Let $K$ be a number field and let $n\ge 1$. Let
$$
S_n=\{x\in K^{\times}|\, (x)=I^n \mbox{ for some ideal } I \subset K\}\big/(K^{\times})^n.
$$
Then there is an exact sequence
$$
1\to E_K/(E_K)^n\to S_n\to C_K[n]\to 1,
$$
where $C_K[n]$ denotes the $n$-torsion in $C_K$ and where the map from
$S_n$ to $C_K[n]$ sends $x$ to the ideal class of $I$.
Also, $S_n$ is the inverse image of $H^1_{lt}(G_K,E_K)[n]$ under
the maps
$$
K^{\times}/(K^{\times})^n \simeq H^1(G_K,\mu_n)\to H^1(G_K,E_K).
$$
\end{proposition}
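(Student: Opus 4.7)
The plan is to handle the two assertions in sequence: first constructing the exact sequence by direct ideal-theoretic arguments, then identifying $S_n$ as a preimage by combining the Kummer sequence with Proposition 1.

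For the exact sequence, I would define $\pi\colon S_n\to C_K[n]$ by sending a representative $x$ with $(x)=I^n$ to the class $[I]$. This is well defined on cosets modulo $(K^\times)^n$---replacing $x$ by $xy^n$ replaces $I$ by $I(y)$---and $[I]^n=1$ since $I^n=(x)$ is principal, so the image lies in $C_K[n]$. Surjectivity is immediate: any class of order dividing $n$ satisfies $I^n=(x)$ for some $x\in K^\times$. For the kernel, $\pi(x)=0$ forces $I=(\alpha)$ principal, hence $x/\alpha^n\in E_K$ and $x$ represents the same class in $S_n$ as the unit $\epsilon:=x/\alpha^n$. So the kernel equals the image of $E_K\to S_n$, whose own kernel is $E_K\cap(K^\times)^n$. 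This intersection equals $E_K^n$, since if $y\in K^\times$ and $y^n$ is a unit then $(y)^n=(1)$; the fractional ideal group of $O_K$ being torsion-free, $(y)=(1)$ and hence $y\in E_K$.

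For the second assertion, I would apply $G_K$-cohomology to the Kummer sequence $1\to\mu_n\to E\xrightarrow{n}E\to 1$, where $E$ denotes the module of units of $\overline{\mathbf Z}$ as in Proposition 1. This sequence is exact because $\overline{\mathbf Q}$ is algebraically closed and any $y\in\overline{\mathbf Q}^\times$ with $y^n\in E$ automatically lies in $E$ (by the same torsion-freeness argument). The long exact sequence, together with Hilbert 90 giving $H^1(G_K,\mu_n)\simeq K^\times/(K^\times)^n$, yields
$$
0\to E_K/E_K^n\to K^\times/(K^\times)^n\xrightarrow{\delta}H^1(G_K,E)[n]\to 0,
$$
in which $\delta(x)$ is represented by the cocycle $\sigma\mapsto y^{\sigma-1}$ for any $y\in\overline{\mathbf Q}^\times$ with $y^n=x$. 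To identify $S_n$ with $\delta^{-1}(H^1_{lt}(G_K,E)[n])$, I would compare this cocycle to the cocycle $c_I\colon\sigma\mapsto\beta^{\sigma-1}$ of Proposition 1, where $(\beta)=I$ in $\overline{\mathbf Z}$. If $x\in S_n$ with $(x)=I^n$ in $O_K$, then $(y)^n=I^n$ in $\overline{\mathbf Z}$; torsion-freeness of the ideal group of any finite extension containing $y$ forces $(y)=I\cdot\overline{\mathbf Z}$, so $\delta(x)=c_I$ lies in $H^1_{lt}(G_K,E)$ and represents $[I]\in C_K[n]$. Conversely, if $\delta(x)$ is locally trivial, then Proposition 1 provides an ideal $I$ of $K$ and $\epsilon\in E$ with $y=\epsilon\beta$, so $(x)=(y)^n=I^n$ in $\overline{\mathbf Z}$, and contracting to $O_K$ gives $x\in S_n$.

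The main technical point is the bookkeeping between ideals of $O_K$ and of $\overline{\mathbf Z}$: one needs that extension of ideals is injective on fractional ideals, and that the fractional ideal group of $\overline{\mathbf Z}$ (taken as the direct limit over finite extensions) has no nontrivial torsion, so that an ideal of the form $I^n$ for $I\subset O_K$ admits a unique $n$-th root in that limit. Both facts follow from unique factorization in Dedekind domains together with compatibility of valuations, and together they are what convert the cohomological local triviality condition into the concrete divisibility condition defining $S_n$.
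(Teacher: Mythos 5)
Your proof is correct and follows essentially the same route as the paper's: a direct ideal-theoretic verification of the exact sequence, and the identification of the Kummer cocycle $\sigma\mapsto (x^{1/n})^{\sigma-1}$ with the cocycle $c_I$ of Proposition 1 via $(x^{1/n})=I$ in $\overline{\mathbf Z}$. The paper states these steps more tersely (calling the exactness ``straightforward'' and omitting the converse inclusion), while you fill in the details, including the torsion-freeness of the ideal group that the paper leaves implicit.
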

\begin{proof} The exactness of the sequence is straightforward.
To verify the last claim, use the fact that 
$g\mapsto g(x^{1/n})/x^{1/n}$ gives the cocycle in $H^1(G_K,\mu_n)$ corresponding to $x$.
Moreover, $I=(x^{1/n})$ in $K(x^{1/n})$, so this is also the cocycle in  
$H_{lt}^1(G_K,E)$ 
corresponding to the ideal class of $I$ under the isomorphism 
of Proposition 1. 
\end{proof}

{\bf Remark.} In the elliptic curve situation, we are interested in 
$$
\mbox{\it Ker}\bigg(\Sha  (E)\to \Sha  (J_0(N))\bigg)\subseteq \mbox{\it Ker}\bigg(H^1(G_{\mathbf  Q}, 
E(\overline{\mathbf  Q}))\to H^1(G_{\mathbf  Q}, J_0(N)(\overline{\mathbf  Q}))\bigg).
$$
In the number field case, we have
$$
\mbox{\it Ker}(C_K\to C_L)\hookrightarrow  \mbox{\it Ker}\bigg(H^1(G_K, E)\to H^1(G_L, E)\bigg).
$$
This map is obtained from the map on Galois groups rather than a map on $E$,
which would be closer to the geometric situation. However, this can easily be remedied.
Let 
$$
J_{L}=\mbox{Hom}_{G_L}(\mathbf  Z[G_{\mathbf  Q}], E)= \mbox{ Maps}(G_{\mathbf  Q}/G_L, E).
$$ 
Shapiro's Lemma says that
$H^1(G_{\mathbf  Q},J_{L})\simeq H^1(G_L, E)$. Therefore,
we have
$$
\mbox{ \it Ker}(C_K\to C_L)\hookrightarrow  \mbox{\it Ker}\left(H^1(G_{\mathbf  Q}, J_K)\to H^1(G_{\mathbf  Q}, J_{L})\right),
$$
and the map is obtained from the natural map $J_K\hookrightarrow J_{L}$.

{\bf Remark.} It is known (see \cite{cremona}) that $\Sha(E)$ becomes trivial in some
abelian variety containing $E$. The question has been raised whether there are classes
of $\Sha(E)$ that are not visible in $J_0(N)$ but which become
trivial in $J_0(M)$ for some $M$ that is a multiple of $N$.
The analogous question can be asked for number fields. The following example shows that
this situation can arise.
Consider the cubic subfield $K$ of $\mathbf Q(\zeta_{163})$.  
It was shown by Kummer that the class group of $K$ is the product of two
groups of order 2. Since $[\mathbf Q(\zeta_{163})^+:K]=27$, and since
the map on ideal classes from the real subfield to the full cyclotomic 
field is injective, these classes
do not capitulate in $\mathbf Q(\zeta_{163})$. In \cite{gras}, G. Gras
points out that the ideal class group of $K$ capitulates in $K(\sqrt{13})$.
Since $\sqrt{13}\in \mathbf Q(\zeta_{13})$, the ideal class group of $K$
capitulates in $\mathbf Q(\zeta_{13\cdot 163})$.

\section{Creating visible elements}

One of the methods Cremona and Mazur use for identifying
visible elements of Shafarevich-Tate groups is the following
(\cite[p. 19]{cremona}). Let $E$ and $E'$ be  elliptic curves contained
in $J_0(N)$ and assume $E[n]=E'[n]$ as subgroups of $J_0(N)$.
Suppose $c\in \Sha(E)\subseteq H^1(G_{\mathbf  Q}, E(\overline{\mathbf  Q}))$
is the image of $\xi\in H^1(G_{\mathbf  Q}, E[n])$. If $\xi$ maps to
$0\in H^1(G_{\mathbf  Q}, E'(\overline{\mathbf  Q}))$, then $c$ maps to 
$0\in \Sha(J_0(N))$, hence is visible. 
$$
\xymatrix{
&c\in H^1(G_{\mathbf Q}, E)\ar[rrdd] && \\
\xi \in H^1(G_{\mathbf Q}, E[n]) \ar[ru]\ar[rd] &  && \\
&  H^1(G_{\mathbf Q}, J_0(N)[n]) \ar[rr] && H^1(G_{\mathbf Q}, J_0(N)) \\
\xi \in H^1(G_{\mathbf Q}, E'[n]) \ar[ru]\ar[rd] &  && \\
& 0\in H^1(G_{\mathbf Q}, E') \ar[rruu] &&
}
$$
In this situation, there exists $R\in E'(\overline{\mathbf Q})$ such that
$\sigma(R)-R=\xi(\sigma)$ for all $\sigma\in \text{Gal}(\overline{\mathbf Q}/\mathbf Q)$.
Since $\xi(\sigma)\in E'[n]$, we have $\sigma(nR)=nR$ for all $\sigma$, so
$nR\in E'(\mathbf Q)$.

When $n=2$, this situation is very much analogous to
the fact that ideal classes of order 2 in quadratic
fields capitulate in suitably chosen
biquadratic fields. Let $K=\mathbf  Q(\sqrt{d})$ be a quadratic
field of discriminant $d$ and let $1<d_1<|d|$ be a fundamental discriminant
dividing $d$ (if such a $d_1$ exists).  Let $F=\mathbf  Q(\sqrt{d_1})$ and $L=K(\sqrt{d_1})$. Then 
$L/K$ is an unramified quadratic extension.
Moreover, $L$ is contained in the smallest cyclotomic field containing
$K$. 

The number $d_1\in K^{\times}/(K^{\times})^2\simeq H^1(K,\mu_2)$ yields an ideal
$J$ of $K$ with $J^2=(d_1)$. Since $d_1$ represents the trivial class in
$F^{\times}/(F^{\times})^2$, it also represents the trivial class in
$L^{\times}/(L^{\times})^2$, so we recover the obvious fact that $J$ becomes principal
in $L$.

A more interesting example is obtained as follows.
Let $\epsilon$ be the fundamental unit of $F$. Then, in 
the notation of Proposition 2,
$\epsilon$ represents a nontrivial element of $S_2(F)$ that has trivial image
in $C_F$.  Assume that the norm
of $\epsilon$ is $+1$. Let $\alpha=1+\epsilon^{-1}$. The 
norm of $\alpha$ is $a=(1+\epsilon^{-1})(1+\epsilon)
\in \mathbf  Z$. Let $\sigma$ be the nontrivial element of
$\mbox{Gal}(F/\mathbf  Q)$. Then $\alpha^{\sigma}/\alpha=\epsilon$.
Therefore the ideal $(\alpha)$ of $F$ is fixed by $\mbox{Gal}(F/\mathbf  Q)$,
and therefore also by $\mbox{Gal}(L/K)$. Since $L/K$ is unramified,
there is an ideal $I$ of $K$ such that $I=(\alpha)$ in $L$.
Moreover, $\alpha^2\epsilon=\alpha\alpha^{\sigma}=a\in \mathbf  Z$,
which implies that $I^2=(a)$ in $K$. The coset of 
$$
a=\alpha^2\epsilon 
\in K^{\times}/(K^{\times})^2\simeq H^1(G_K, \mu_2)
$$ 
maps to the coset of 
$$
\epsilon\in L^{\times}/(L^{\times})^2\simeq H^1(G_L,\mu_2).
$$
This coset maps to the trivial
ideal class in $C_L$, corresponding to the fact that the ideal class of $I$ 
capitulates in $L$. This is clearly an analogue of the 
elliptic curve situation described above, where $\sqrt{\epsilon}$ corresponds 
to the point $R$.

$$
\xymatrix{
&I\in H^1(G_K, E)\ar[rrdd] && \\
a\in K^{\times}/{K^{\times}}^2 = H^1(G_K, \mu_2) \ar[ru]\ar[rd] &  && \\
& a\sim \epsilon\in H^1(G_L, \mu_2) \ar[rr] && H^1(G_L,E) \\
\epsilon \in F^{\times}/{F^{\times}}^2 = H^1(G_F, \mu_2) \ar[ru]\ar[rd] &  && \\
& 1\in H^1(G_F, E) \ar[rruu] &&
}
$$

It is natural to ask about the class of the ideal $I$
just constructed. It can be identified via the following result.

\begin{lemma} Let $\epsilon=(x+y\sqrt{d_1})/2$ be the fundamental unit 
of $\mathbf  Q(\sqrt{d_1})$. Then
$x+2= rw^2$ for some positive integers $r, w$ such that $r|2d_1$ and 
such that $r$ and $4d_1/r$ are not squares.
\end{lemma}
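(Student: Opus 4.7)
The hypothesis $N(\epsilon) = +1$ yields $x^2 - d_1 y^2 = 4$, which factors as $(x-2)(x+2) = d_1 y^2$. Since $\epsilon > 1$ forces $x > 2$, all three factors are positive integers. I will take $r$ to be the squarefree part of $x+2$, so that $x + 2 = r w^2$ with $r, w$ positive integers, and verify the three required properties of $(r,w)$.

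The divisibility $r \mid 2d_1$ is obtained by studying the element $\alpha = 1 + \epsilon \in O_K$. Its norm is $N(\alpha) = 2 + \epsilon + \bar\epsilon = x + 2$, and $\alpha^\sigma/\alpha = 1/\epsilon$ is a unit, so the principal ideal $(\alpha) \subset O_K$ is Galois-stable. In a quadratic field a Galois-stable ideal factors as a product of ramified prime ideals times an ideal extended from $\mathbf Z$, since for unramified $p$ the factors above $p$ are either inert or a conjugate pair appearing to equal exponents. Taking norms, the squarefree part of $N(\alpha) = x+2$ is a product of those rational primes that ramify in $K$, and such primes divide the discriminant $d_1$; hence $r \mid d_1 \mid 2d_1$.

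For the non-square claims the strategy in both cases is to suppose the obstruction fails and derive that $\epsilon$ is the square of a unit of $O_K$, contradicting either the fundamentality of $\epsilon$ or the norm hypothesis. If $r$ is a square then $r=1$ and $x+2 = w^2$; the ramified-prime exponents in the factorization of $(\alpha)$ are then all even, so $(\alpha) = (w)$, giving $\alpha = \pm w \cdot u$ for some $u \in O_K^\times$. One computes $u^2 = \alpha/\bar\alpha = (1+\epsilon)/(1+1/\epsilon) = \epsilon$, contradicting fundamentality. If on the other hand $4d_1/r$ is a square, substituting $x+2 = rw^2$ into $(x-2)(x+2) = d_1 y^2$ forces $x-2$ to be a perfect square $t^2$, and the same ideal-theoretic analysis applied to $\beta = \epsilon - 1$ (which has $N(\beta) = 2 - x = -t^2$ and satisfies $\beta^\sigma/\beta = -1/\epsilon$, hence generates a Galois-stable ideal) yields $\beta = \pm t\cdot v$ with $v \in O_K^\times$ and $N(v) = -1$, contradicting the hypothesis $N(\epsilon) = +1$.

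The only genuinely delicate step is the first one, namely the identification of the squarefree part of $x+2$ with a product of ramified primes through the Galois-stable ideal $(\alpha)$; once that structural dictionary is in place, both non-square claims are uniform applications of exactly the same analysis to $\alpha$ and $\beta$. A possible alternative to the ideal argument is a direct prime-by-prime valuation analysis of $(x-2)(x+2) = d_1 y^2$, handling the prime $2$ separately, but the ideal-theoretic version is cleaner and simultaneously sets up the subsequent unit constructions.
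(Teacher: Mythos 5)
Your proof is correct, but it takes a genuinely different route from the paper's, which is a three-line argument: from $(x+2)(x-2)=d_1y^2$ and the identity $\sqrt{\epsilon}=\tfrac12\bigl(\sqrt{x+2}+\sqrt{x-2}\bigr)$, if either $x+2$ or $x-2$ were a perfect square then $\sqrt{\epsilon}$ would lie in $\mathbf Q(\sqrt{d_1})$, contradicting fundamentality; the divisibility $r\mid 2d_1$ then follows because $\gcd(x+2,x-2)$ divides $4$, so every odd prime dividing $x+2$ to an odd power must divide $d_1$. (Note that ``$4d_1/r$ is a square'' is equivalent, modulo rational squares, to ``$x-2$ is a square,'' since $x-2\equiv d_1(x+2)\equiv d_1 r\equiv 4d_1/r$ in $\mathbf Q^{\times}/(\mathbf Q^{\times})^2$; so both non-square claims reduce to the single radical identity.) You instead work through the arithmetic of $O_K$, using the Galois-stable principal ideals $(1+\epsilon)$ and $(\epsilon-1)$ to show that the squarefree part of $x+2$ is a product of ramified primes, and converting the two non-square claims into ``$\epsilon$ is not the square of a unit'' and ``$O_K$ has no unit of norm $-1$.'' Your contradictions are the paper's in disguise: if $x-2=t^2$ then $\sqrt{\epsilon}=\tfrac12\bigl(t+(y/t)\sqrt{d_1}\bigr)$ is precisely a unit of norm $-1$ in $\mathbf Q(\sqrt{d_1})$ whose square is $\epsilon$. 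What your heavier divisibility step buys is exactly the structural information the paper extracts only after the lemma, namely that $(1+\epsilon)=I/w$ factors into ramified primes and thereby identifies the class of $I$ and the ``non-obvious'' relation in the class group; so your version is longer but better integrated with the surrounding discussion, while the paper's is shorter and entirely elementary. All the steps you flag as delicate do check out (in particular $r\mid d_1$, not just $r\mid 2d_1$, since a product of distinct ramified primes divides the fundamental discriminant), so there is no gap.
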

\begin{proof} We have $x^2-d_1y^2=4$, so $(x+2)(x-2)=d_1y^2$. If $x+2$
or $x-2$
is a square, then $\sqrt{\epsilon}=\frac12(\sqrt{x+2}+\sqrt{x-2})\in
\mathbf  Q(\sqrt{d_1})$, which is a contradiction.
Since $\gcd(x+2, x-2)$ divides 4, the result follows easily.
\end{proof}

Since $I^2=(a)$ with $a=(1+\epsilon^{-1})(1+\epsilon)=2+x=rw^2$
in the notation of the lemma, we find that $(I/w)^2=(r)$. Therefore
the ideal class of $I$ comes from the class of $r$ in
$K^{\times}/(K^{\times})^2$. The ideal class of $I$ is nontrivial in
$K$ but capitulates in $L$. This 
capitulation represents ``non-obvious'' capitulation in $L$ (where
the ``obvious'' capitulation is for the ideal whose square is
$(d_1)$). Moreover, the factorization of $(1+\epsilon)$, which equals $I/w$ in $L$,
into a product
of primes gives the ``non-obvious'' relation in the class group
of $F$ (where the ``obvious'' relation is that the product of all
ramified primes, with a possible omission of the prime above 2, is principal).

\section{Capitulation of ideal classes: General results}

\begin{lemma} Let $K\subseteq L$ be number fields with $[L:K]=n$. Let $I$ be an ideal
of $K$. If $I$ becomes principal in $L$ then $I^n$ is principal in $K$. \end{lemma}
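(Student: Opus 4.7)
The plan is to apply the relative ideal norm $N_{L/K}$ from ideals of $L$ to ideals of $K$ to the principality relation in $L$. By hypothesis, there exists $\alpha \in L^{\times}$ such that $I\mathcal{O}_L = (\alpha)$. Taking norms of both sides gives
$$
N_{L/K}(I\mathcal{O}_L) = N_{L/K}\bigl((\alpha)\bigr) = \bigl(N_{L/K}(\alpha)\bigr),
$$
and since $N_{L/K}(\alpha) \in K^{\times}$, the right-hand side is a principal ideal of $K$.

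The remaining step is to identify the left-hand side with $I^n$. First I would reduce to the case where $I = \mathfrak{p}$ is a prime of $K$, by multiplicativity of both the extension of ideals and the norm. For such $\mathfrak{p}$, the factorization $\mathfrak{p}\mathcal{O}_L = \prod_{\mathfrak{P} \mid \mathfrak{p}} \mathfrak{P}^{e(\mathfrak{P}|\mathfrak{p})}$ combined with the definition $N_{L/K}(\mathfrak{P}) = \mathfrak{p}^{f(\mathfrak{P}|\mathfrak{p})}$ yields
$$
N_{L/K}(\mathfrak{p}\mathcal{O}_L) = \mathfrak{p}^{\sum_{\mathfrak{P} \mid \mathfrak{p}} e(\mathfrak{P}|\mathfrak{p}) f(\mathfrak{P}|\mathfrak{p})} = \mathfrak{p}^{n},
$$
using the standard identity $\sum_{\mathfrak{P} \mid \mathfrak{p}} e(\mathfrak{P}|\mathfrak{p}) f(\mathfrak{P}|\mathfrak{p}) = [L:K] = n$. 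Extending multiplicatively to arbitrary $I$ gives $N_{L/K}(I\mathcal{O}_L) = I^n$, and we conclude $I^n = \bigl(N_{L/K}(\alpha)\bigr)$ is principal in $K$.

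There is no real obstacle here; the statement is essentially a consequence of the formula $N_{L/K}(I\mathcal{O}_L) = I^{[L:K]}$, which itself follows from the fundamental degree identity. One minor point worth noting is that the hypothesis does not require $L/K$ to be Galois, so it is the relative ideal norm (defined via $e$'s and $f$'s on primes), rather than a product of Galois conjugates, that must be used; but this is exactly the norm that commutes with principal ideals via the field norm on elements, so the argument goes through unchanged.
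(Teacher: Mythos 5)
Your proof is correct and is exactly the argument the paper gives: the paper's one-line proof says that if $I$ is principal in $L$ its norm is principal in $K$, and that norm is $I^n$. You have simply supplied the standard details (multiplicativity, $N_{L/K}(\mathfrak{p}\mathcal{O}_L)=\mathfrak{p}^{\sum e f}=\mathfrak{p}^n$, and compatibility of the ideal norm with the element norm) that the paper leaves implicit.
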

\begin{proof}
If $I$ is principal in $L$, its norm is principal in $K$. But the norm is $I^n$. 
\end{proof}

{\bf Remark.} The analogue of this result is true for elliptic curves:
If an element of $\Sha(E)$ is visible, then its order divides
the degree of modular parametrization of $E$ (see \cite{cremona}).

\medskip

Let $K/\mathbf Q$ be an abelian extension of degree $d$
and of conductor $n$, so $K\subseteq \mathbf Q(\zeta_n)$. 
We say that an ideal class has {\it potential capitulation} if
its order divides $\phi(n)/d$. We say that $K$ has {\it maximal
capitulation} if all ideal classes with potential capitulation
actually capitulate in $\mathbf  Q(\zeta_n)$,
and {\it maximal $p$-capitulation} if all classes of $p$-power order with
potential capitulation actually capitulate in $\mathbf Q(\zeta_n)$.

The relevant question to consider is whether classes with potential
capitulation actually capitulate. There is a marked difference in the behaviors
of the real and imaginary ideal classes. The real case is related to the result
of Kurihara \cite{kurihara} (see also \cite{gras}) that says that if $K$ is totally real,
then all ideal classes of $K$ capitulate in the field $K(\zeta_{\infty})$ obtained 
by adjoining all roots of unity to $K$. On the other hand, a result of
Brumer \cite{brumer} says that the class group (defined as a direct limit) 
of the extension of $\mathbf Q$ generated by 
all roots of unity is isomorphic to a countable direct sum of factors $\mathbf Q/\mathbf Z$.
By Kurihara's result, these classes cannot arise from class groups of real fields. 
Of course, both of these results relate to capitulation in fields much
larger than the minimal cyclotomic field containing $K$. But they give
an indication of the difference between the two cases.

There is an explanation of why there should be a lot of capitulation in the
real case. The Cohen-Lenstra heuristics \cite{cohen} predict that class numbers of
real fields tend to be small. Suppose a prime $p$ divides the class number of
$K\subseteq \mathbf Q(\zeta_n)^+$. Since $h_K$ tends to be small, it is likely that
$p^2\nmid h_K$. Now suppose that $p$ divides the class number $h_n^+$ of the
real cyclotomic field. By the same reasoning, it is
likely that $p^2\nmid h_n^+$. By Lemma 4 (i) below, the classes of order $p$
in $K$ capitulate in this case. We shall see examples of this phenomenon in Section 6.

The following result is useful when working with totally real fields $K$ of prime
conductor $\ell$. It shows that we need to consider capitulation only from $K$ to the real
subfield $\mathbf Q(\zeta_{\ell})^+$ of the cyclotomic field.

\begin{proposition} Let $\ell$ be prime. Then the map from the class group
of $\mathbf Q(\zeta_{\ell})^+$ to the class group
of $\mathbf Q(\zeta_{\ell})$ is injective.
\end{proposition}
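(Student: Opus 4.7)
The plan is to apply Corollary 1 to reduce the statement to the vanishing of $H^1_{lt}(L/K, E_L)$, where $L = \mathbf{Q}(\zeta_\ell)$ and $K = L^+$. The cases $\ell = 2, 3$ are trivial, since then $K = \mathbf{Q}$ has trivial class group, so I assume $\ell \geq 5$, let $G = \mathrm{Gal}(L/K) = \langle\sigma\rangle$ with $\sigma$ complex conjugation, and write $\mathfrak{L}$ for the unique prime of $L$ above $\ell$, lying above the prime $\mathfrak{l}$ of $K$.

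The first step is to compute $H^1(G, E_L)$. The key input is the classical fact that the Hasse unit index $Q = [E_L : \mu_L E_K]$ equals $1$ for cyclotomic fields of prime conductor, so every unit of $L$ has the form $\zeta\epsilon$ with $\zeta \in \mu_L$ and $\epsilon \in E_K$. Since $\sigma$ inverts roots of unity and fixes $E_K$, the equation $u u^\sigma = 1$ becomes $\epsilon^2 = 1$, so the kernel of $1+\sigma$ on $E_L$ coincides with $\mu_L$; the same decomposition gives $u^{1-\sigma} = \zeta^2$, so the image of $1-\sigma$ equals $\mu_L^2 = \langle\zeta_\ell\rangle$. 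Hence $H^1(G, E_L) = \mu_L/\mu_L^2 \cong \mathbf{Z}/2$, generated by the class of $-1$.

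The main obstacle is then to show that this nontrivial class is not locally trivial, so that $H^1_{lt}(L/K, E_L) = 0$; my plan is to obstruct it at $\mathfrak{L}$. If some $v \in U_{L_\mathfrak{L}}$ satisfied $v^{1-\sigma} = -1$, then $v^\sigma = -v$ would force $v$ into the $(-1)$-eigenspace of $\sigma$ on $L_\mathfrak{L}$, which is the $K_\mathfrak{l}$-line spanned by $\omega := \zeta_\ell - \zeta_\ell^{-1}$. Since $\omega = \zeta_\ell^{-1}(\zeta_\ell + 1)(\zeta_\ell - 1)$ differs from the uniformizer $\zeta_\ell - 1$ by local units, it has $\mathfrak{L}$-valuation $1$; on the other hand $L/K$ is totally ramified of degree $2$ at $\ell$, so every element of $K_\mathfrak{l}^\times$ has even $\mathfrak{L}$-valuation. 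Every nonzero element of $K_\mathfrak{l}\cdot\omega$ therefore has odd $\mathfrak{L}$-valuation and cannot be a unit, giving the desired contradiction. Consequently $H^1_{lt}(L/K, E_L) = 0$ and $C_K \to C_L$ is injective.
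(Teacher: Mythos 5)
Your proof is correct, but it is genuinely different in presentation from the paper's, which simply cites \cite[Thm.\ 4.14]{washington} and gives no argument. The classical proof behind that citation works directly with a generator: if $I=(\alpha)$ in $L=\mathbf Q(\zeta_\ell)$ for an ideal $I$ of $K=L^+$, then $\alpha/\overline{\alpha}$ is a root of unity $\pm\zeta_\ell^a$, and a congruence modulo the ramified prime above $\ell$ rules out the minus sign, after which $\alpha\zeta_\ell^{-c}$ is a real generator. You instead invoke the Corollary to Proposition 1 to identify $\mathrm{Ker}(C_K\to C_L)$ with $H^1_{lt}(L/K,E_L)$, use the Hasse unit index $Q=1$ to pin down $H^1(\mathrm{Gal}(L/K),E_L)\cong\mathbf Z/2\mathbf Z$ generated by $-1$, and then kill that single class by the local obstruction at $\ell$ (the $(-1)$-eigenspace $K_{\mathfrak l}\cdot(\zeta_\ell-\zeta_\ell^{-1})$ consists of elements of odd $\mathfrak L$-valuation, so contains no unit). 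All the steps check out: the computation of $\ker(1+\sigma)$ and $\mathrm{im}(1-\sigma)$ from $E_L=\mu_LE_K$ is right, and it does suffice to obstruct at the one ramified place, since the archimedean and unramified local $H^1$'s vanish. What your route buys is coherence with the paper's framework (it is exactly the mechanism of Section 1, and it makes visible that the only conceivable capitulation is $2$-torsion coming from $-1$); what it costs is the black-box input $Q=1$, whose standard proof (\cite[Thm.\ 4.12]{washington}) is itself the same congruence-at-$\ell$ argument that drives the classical proof, so the two approaches share the same arithmetic kernel even though the packaging differs.
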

\begin{proof} See \cite[Thm. 4.14]{washington}).
\end{proof}

\section{Classical methods}

To treat the imaginary classes, we need the following. 

\begin{lemma} Let $K$ be a number field contained in the $n$th cyclotomic field
$\mathbf  Q(\zeta_n)$, and let $d$ be the number of roots of unity in $K$. Let
$\mu_m$ be the group of roots of unity in $\mathbf Q(\zeta_n)$ (so $m=n$ or $2n$).
Then $H^1(G_{\mathbf  Q(\zeta_n)/K},\, \mu_m)$ is annihilated by $d$.
\end{lemma}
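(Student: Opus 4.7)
The plan is to reduce to prime-power modulus and then exhibit an explicit coboundary using the abelian nature of $G=\mathrm{Gal}(\mathbf{Q}(\zeta_n)/K)$. Since $\mathbf{Q}(\zeta_m)=\mathbf{Q}(\zeta_n)$, the action of $G$ on $\mu_m$ is given by an \emph{injective} character $\chi\colon G\hookrightarrow(\mathbf{Z}/m)^{*}$; in particular $G$ is abelian, and by the definition of $d$ we have $\mu_m^{G}=\mu_d$.

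The first step is the Sylow decomposition. Writing $m=\prod_p p^{a_p}$, the isomorphism $\mu_m\cong\bigoplus_p\mu_{p^{a_p}}$ is $G$-equivariant, so $H^1(G,\mu_m)\cong\bigoplus_p H^1(G,\mu_{p^{a_p}})$, and the $p$-part of $d$ equals $|(\mu_{p^{a_p}})^G|$. It therefore suffices to prove the lemma when $m=p^{k}$ and $d=p^{j}$ for some $0\le j\le k$. The case $j=k$ is trivial because then $\mu_m$ itself has exponent $d$, so assume $j<k$.

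The identity $\mu_m^{G}=\mu_d$ with $m=p^k$, $d=p^j$ unpacks to $\min_{g\in G}v_p(\chi(g)-1)=j$, so I would fix $\sigma_0\in G$ with $v_p(\chi(\sigma_0)-1)=j$ and write $\chi(\sigma_0)-1=p^{j}u$ with $u\in(\mathbf{Z}/p^{k})^{*}$. For any $1$-cocycle $c\colon G\to\mu_m$, identifying $\mu_m$ with $\mathbf{Z}/p^{k}$ additively, the abelianness of $G$ (equate the two expansions of $c(\sigma\tau)=c(\tau\sigma)$) yields the swap identity
$$(\chi(\sigma)-1)\,c(\tau)=(\chi(\tau)-1)\,c(\sigma)\qquad\text{for all }\sigma,\tau\in G.$$
Specializing at $\sigma=\sigma_{0}$ and setting $a=u^{-1}c(\sigma_0)\in\mathbf{Z}/p^{k}$, multiplication by $u^{-1}$ gives
$$p^{j}\,c(\tau)=(\chi(\tau)-1)\,a\qquad\text{for all }\tau\in G,$$
which is exactly the statement that $d\cdot c$ is a coboundary.

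The one step that deserves care is the existence of $\sigma_0$: one must verify that $\mu_m^{G}=\mu_d$ really forces a \emph{single} element of $G$ to achieve the minimal $p$-adic valuation $j$ on $\chi(\sigma)-1$, rather than only the joint gcd having this valuation. This follows from a short annihilator computation, noting that $\mathrm{ann}_{\mathbf{Z}/p^{k}}(\chi(g)-1)=p^{k-v_p(\chi(g)-1)}\mathbf{Z}/p^{k}$ and that intersecting these subgroups gives $p^{k-\min_g v_p(\chi(g)-1)}\mathbf{Z}/p^{k}$, which must equal $\mu_d=p^{k-j}\mathbf{Z}/p^{k}$. Once this is in hand, every other step is formal.
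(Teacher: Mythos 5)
Your proof is correct, but it takes a genuinely different route from the paper's. The paper quotes the proposition of Atiyah--Wall that conjugation acts trivially on group cohomology: for $t\in G$ the composite $H^1(G,A)\to H^1(G,A^t)\to H^1(G,A)$ is the identity, and since $G$ is abelian and acts on $\mu_m\cong\mathbf Z/m\mathbf Z$ through multiplication, this composite is multiplication by $t'\equiv t^{-1}\pmod m$; hence each $t'-1$ kills $H^1(G,\mu_m)$, and therefore so does $d=\gcd(\{t'-1\},m)$, which is then identified with the number of roots of unity in $K$. You instead derive the same annihilation statement by hand: expanding $c(\sigma\tau)=c(\tau\sigma)$ gives the swap identity $(\chi(\sigma)-1)c(\tau)=(\chi(\tau)-1)c(\sigma)$, which exhibits $(\chi(\sigma)-1)\cdot c$ as the explicit coboundary of $c(\sigma)$. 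Your approach is more elementary and self-contained (no citation needed, and it produces an explicit witness for the coboundary), at the cost of the prime-by-prime reduction and the annihilator computation needed to produce a single $\sigma_0$ realizing the minimal valuation. That last step, incidentally, could be bypassed entirely: since the annihilator of a fixed cohomology class is an ideal of $\mathbf Z/m\mathbf Z$, once each $\chi(\sigma)-1$ annihilates the class, so does the ideal they generate together with $m$, namely $(d)$ --- exactly the shortcut the paper takes after its appeal to Atiyah--Wall. Both arguments identify $d$ with $|\mu_m^{G}|$ in the same way, via $\zeta\in K\iff\zeta^{\chi(\sigma)}=\zeta$ for all $\sigma$.
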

\begin{proof} Let $G$ be a group, let $t\in G$, and let $A$ be a $G$-module.
The automorphism $g\mapsto tgt^{-1}$ gives $A$ a new module structure;
call it $A^t$. The map $\psi: a\mapsto t^{-1}a$ is a $G$-homomorphism
from $A^t$ to $A$. Proposition 3
of \cite{atiyah} says that the composite map
$$
H^1(G,A)\to H^1(G,A^t)\stackrel{\psi_*}{\to} H^1(G,A)
$$
is the identity map.

In our case, identify $\mbox{Gal}(\mathbf  Q(\zeta_n)/K)$ with a subgroup $G$ 
of $(\mathbf  Z/m\mathbf  Z)^{\times}$.
The module $A=\mu_m$ becomes $\mathbf  Z/m\mathbf  Z$ with $G$ acting by multiplication.
Since conjugation by $t\in G$ is trivial, $A^t=A$. The map $\psi$
is multiplication by an integer $t'\equiv t^{-1}\pmod m$, so 
the map on cohomology is also multiplication by $t'$.
Therefore 
$$
t': H^1(G_{\mathbf  Q(\zeta_n)/K},\, \mu_m)\to H^1(G_{\mathbf  Q(\zeta_n)/K},\, \mu_m)
$$
is the identity for all integers $t'\in G$. 
Let $d=\gcd(\{t'-1\},m)$, where $t'$ runs through all such integers.
Then $d$ annihilates $H^1(G_{\mathbf  Q(\zeta_n)/K},\, \mu_m)$.

If $\zeta\in \mu_m$, then $\zeta\in K$ if and only if 
$\zeta^{t'}=\zeta$ for all $t'\in G$. Therefore $\zeta\in K$ 
if and only if
$\zeta^d=1$. This means that there are exactly  $d$ roots
of unity in $K$. 
\end{proof}

\begin{proposition} Let $K$ be a subfield of $\mathbf  Q(\zeta_n)$ and let
$d$ be the number of roots of unity in $K$. Let $I$ be an ideal
of $K$ that becomes principal in $\mathbf  Q(\zeta_n)$.
Then $(I/\overline{I})^d$ is principal in $K$ (where $\overline{I}$ denotes the 
complex conjugate).
\end{proposition}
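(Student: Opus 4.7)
The plan is to pick $\alpha \in L^\times$ with $IO_L = (\alpha)$, where $L = \mathbf{Q}(\zeta_n)$, set $u = \alpha/\bar\alpha$, and study the principal cocycle $\sigma \mapsto \sigma(u)/u$ for $\sigma \in G := \mbox{Gal}(L/K)$. I expect this cocycle to take values in $\mu_m$; once that is established, Lemma 2 will produce $\zeta \in \mu_m$ with $u^d/\zeta \in K^\times$, which gives the conclusion after passing to ideals.

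To set this up, observe that $K$, being abelian over $\mathbf{Q}$, is Galois, so complex conjugation $c$ restricts to an automorphism of $K$, $\bar I$ is an ideal of $K$, and $(u) = I/\bar I$ as fractional ideals of $L$. For $\sigma \in G$ set $\eta_\sigma = \sigma(\alpha)/\alpha$, a unit of $L$ since $\sigma(I) = I$. Because $\mbox{Gal}(L/\mathbf{Q})$ is abelian, $c$ commutes with $\sigma$, so $\sigma(\bar\alpha) = \overline{\sigma(\alpha)}$ and hence $\sigma(u)/u = \eta_\sigma/\overline{\eta_\sigma}$. At every archimedean place $\tau$ of $L$ the centrality of $c$ likewise gives $|\overline{\eta_\sigma}|_\tau = |\eta_\sigma|_\tau$, so $\eta_\sigma/\overline{\eta_\sigma}$ is a unit whose archimedean absolute values all equal $1$. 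By Kronecker's theorem (equivalently, Dirichlet's unit theorem applied to the logarithmic embedding) it is a root of unity, and therefore lies in $\mu_m$.

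Consequently $\sigma \mapsto \sigma(u)/u$ defines a class in $H^1(G,\mu_m)$. By Lemma 2 there exists $\zeta \in \mu_m$ with $\sigma(u^d)/u^d = \sigma(\zeta)/\zeta$ for every $\sigma \in G$, so $u^d/\zeta$ is $G$-invariant and lies in $K^\times$. Passing to fractional ideals of $L$ gives $(I/\bar I)^d = (u^d) = (u^d/\zeta)$, which is the extension of a principal ideal of $K$; since the extension map on fractional ideals from $K$ to $L$ is injective, $(I/\bar I)^d$ is itself principal in $K$. The step I expect to require the most care is verifying that $\sigma(u)/u$ lies in $\mu_m$: this is where the CM structure of $L$ and the commutativity of $\mbox{Gal}(L/\mathbf{Q})$ really matter, and it is what allows Lemma 2 to be applied instead of the much less controlled group $H^1(G, E_L)$.
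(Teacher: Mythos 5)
Your proof is correct and follows essentially the same route as the paper: form the cocycle $\sigma\mapsto(\alpha/\overline{\alpha})^{\sigma-1}$, observe it takes values in the roots of unity $\mu_m$ (the paper states this more tersely as ``a unit of absolute value 1, hence a root of unity,'' while you carefully justify it via the centrality of complex conjugation and Kronecker's theorem), and then apply the lemma that $d$ annihilates $H^1(\mbox{Gal}(\mathbf{Q}(\zeta_n)/K),\mu_m)$ to descend $(\alpha/\overline{\alpha})^d$ to $K$ up to a root of unity. No substantive differences.
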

\begin{proof} Suppose $I=(\alpha)$ in $\mathbf  Q(\zeta_n)$. Let $\sigma\in
\mbox{Gal}(\mathbf  Q(\zeta_n)/K)$. Then $\alpha^{\sigma-1}\in E_{\mathbf  Q(\zeta_n)}$. Therefore,
$(\alpha/\overline{\alpha})^{\sigma-1}$ is a unit of absolute value 1, hence
a root of unity. It follows that the map
$\sigma\mapsto (\alpha/\overline{\alpha})^{\sigma-1}$ is a cocycle
for $H^1(G_{\mathbf  Q(\zeta_n)/K},\, \mu_m)$, where $\mu_m$ is the group of all roots of unity
in $\mathbf Q(\zeta_n)$. By Lemma 3, there exists $\zeta\in \mu_m$ such that 
$(\alpha/\overline{\alpha})^{d(\sigma-1)}=\zeta^{\sigma-1}$ for all $\sigma$.
This implies that $(\alpha/\overline{\alpha})^d/\zeta\in K$, hence that
$(I/\overline{I})^d$ is principal in $K$. 
\end{proof}

\begin{corollary} Let $K$ be a subfield of $\mathbf Q(\zeta_n)$ 
and suppose that
the only roots of 
unity in $K$ are $\pm1$. If $I$ is an ideal of $K$ such that $I\overline{I}$
is principal in $K$ and $I$ is principal in $\mathbf  Q(\zeta_n)$, then
$I^4$ is principal in $K$. 
\end{corollary}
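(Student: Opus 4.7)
The plan is to combine the preceding Proposition with the hypothesis $I\overline{I}$ principal, exploiting that the hypothesis ``only roots of unity in $K$ are $\pm 1$'' forces $d=2$ in the Proposition.

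First I would invoke the Proposition with $d=2$: since $I$ is principal in $\mathbf Q(\zeta_n)$, the Proposition tells us that $(I/\overline{I})^2$ is principal in $K$, i.e.\ $I^2 \overline{I}^{-2}$ is a principal fractional ideal of $K$. Next, the hypothesis that $I\overline{I}$ is principal in $K$ gives, upon squaring, that $I^2\overline{I}^{2}$ is principal in $K$. Multiplying these two principal fractional ideals yields
\[
I^4 \;=\; (I^2\overline{I}^{-2})\cdot(I^2\overline{I}^{2})
\]
principal in $K$, as required.

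There is really no obstacle here: once one reads off $d=2$ from the hypothesis on roots of unity, the Corollary is a one-line consequence of the Proposition together with the assumption $I\overline{I}=(\beta)$. The only thing to keep straight is that we are working with fractional ideals so that dividing by $\overline{I}^2$ is legitimate; this is harmless because the Proposition is naturally a statement about principal fractional ideals.
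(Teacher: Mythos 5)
Your proof is correct and is exactly the intended derivation: the paper leaves the Corollary without an explicit proof, but it follows from Proposition 4 precisely as you say, by taking $d=2$ and multiplying the principal fractional ideals $(I/\overline{I})^2$ and $(I\overline{I})^2$ to get $I^4$.
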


The corollary applies in particular to imaginary quadratic fields.
In this case, the 4-torsion of the class group is of order at most
$4^{s-1}$, where $s$ is the number of prime factors of the discriminant.
It is therefore an easy consequence of Siegel's theorem (that $\log(h)\sim
\frac12\log(|d|)$) that the 4-torsion is only a small part
of the class group when the discriminant is large, and therefore
most of the class group does not capitulate in $\mathbf  Q(\zeta_d)$.

\smallskip
{\bf Example.} Here is an example where a class of order 4 capitulates.
Let $K=\mathbf  Q(\sqrt{-39})$, whose class group is cyclic of
order 4, generated by the ideal $I=(2,\frac{1+\sqrt{-39}}{2})$.
The ideal $J=(3,\frac{-3+\sqrt{-39}}{2})$ is not principal
but satisfies $J^2=(3)$, hence has order 2 in the class
group of $K$. Therefore $I^2$ is in the same class
as $J$. Since $J$ becomes principal in $L=\mathbf  Q(\sqrt{-39}, \sqrt{-3})$,
namely $J=(\sqrt{-3})$, it follows that $I^2$ is principal in 
$L$. However, $I$ cannot be principal in $L$ since then
the norm from $L$ to $K$ of $I$, namely $I^2$, would
be principal in $K$, which is not the case.
The class number of $\mathbf  Q(\zeta_{39})$ is 2.
Since $\mathbf  Q(\zeta_{39})/L$ is totally ramified, the norm $N: C_{\mathbf  Q(\zeta_{39})}\to C_L$
is surjective. 
If $I$ is not principal in $\mathbf  Q(\zeta_{39})$, then it generates the class group,
hence $N(I)=I^6$ generates the class group of $L$; contradiction.
Therefore $I$ is principal in $\mathbf  Q(\zeta_{39})$.
\smallskip

For a quadratic field, the classes of order 2 always capitulate
in the cyclotomic field.
This is easily seen as follows: Let $K=\mathbf  Q(\sqrt{d})$, where 
$d$ is the discriminant of $K$.
The ideal classes of order 2
are generated by ideals  $I$ with $I^2=(r)$ and $r$ 
dividing $d$. However, $\mathbf  Q(\zeta_{|d|})$ is the smallest cyclotomic field
containing $K$, and $\mathbf  Q(\zeta_{|d|})$ contains $\sqrt{\pm r}$ for each
such $r$ and an appropriate choice of sign. Therefore each $I$ becomes
principal in $\mathbf  Q(\zeta_{|d|})$.

More generally, Furuya \cite{furuya} has shown that when $K/\mathbf  Q$ is an abelian
extension,  every ideal fixed by $\mbox{Gal}(K/\mathbf  Q)$ becomes
principal in the genus field of $K$ (i.e., the maximal abelian
extension of $\mathbf  Q$ that is unramified over $K$). Since
the genus field is contained in the smallest cyclotomic field
containing $K$, all such ideals capitulate in the cyclotomic field.

The following result is useful in many cases.
\begin{lemma}  Suppose $\ell$ is prime and $L/F$ is an 
extension of number fields. Also, suppose $L/F$ has no nontrivial
unramified subextensions $M/F$. Let $h_F$ and $h_L$ be the class numbers of $F$ and $L$. \newline
(i)  Assume $[L:F]=\ell^a$. If $\ell\nmid h_L/h_F$, then 
the kernel of the map
$C_F\to C_L$ is exactly the classes of order dividing $\ell^a$.\newline
(ii) Assume $\ell$ is odd and $L/F$ is Galois of degree $\ell$.
If the $\ell$-power part of the class
group of $L$ is cyclic of order $\ell^k$ and the $\ell$-power part of
the class group of $F$ has order $\ell^f$ with $f<k$, then
$f=k-1$ and the map $C_F\to C_L$ is injective.\newline
(iii) Assume $\ell$ is odd and $L/F$ is Galois of degree $\ell$.
If the $\ell$-power part of the class group of $L$ is isomorphic to
$\mathbf Z/\ell\mathbf Z\times \mathbf Z/\ell\mathbf Z$, then all classes
of order $\ell$ in $F$ become principal in $L$.

\end{lemma}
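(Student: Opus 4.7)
The plan is to base all three parts on two common ingredients. First, for $j:C_F\to C_L$ the natural inclusion, the identity $N_{L/F}\circ j=[L:F]$ holds on $C_F$. Second, the no-nontrivial-unramified-subextension hypothesis forces $L\cap H_F=F$ for $H_F$ the Hilbert class field of $F$; class field theory then identifies the cokernel of $N_{L/F}:C_L\to C_F$ with $\operatorname{Gal}(L\cap H_F/F)$, which is therefore trivial, so $N_{L/F}$ is surjective (and in particular surjective on $\ell$-parts). In the Galois case of parts (ii) and (iii), combining $j\circ N_{L/F}=N$ (the group-algebra norm $\sum_\sigma\sigma$) with the surjectivity of $N_{L/F}$ yields the key identification $j(C_F^{(\ell)})=N(C_L^{(\ell)})$ inside $C_L^{(\ell)}$.

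For part (i), the condition $\ell\nmid h_L/h_F$ gives $|C_L^{(\ell)}|=|C_F^{(\ell)}|$, so the surjection $N_{L/F}^{(\ell)}:C_L^{(\ell)}\to C_F^{(\ell)}$ is an isomorphism. The kernel of $j$ on $C_F^{(\ell)}$ therefore equals the kernel of $N_{L/F}\circ j=\ell^a$, namely $C_F[\ell^a]$. On the prime-to-$\ell$ part of $C_F$, multiplication by $\ell^a$ is a unit and so $j$ is injective there. Hence $\ker(j)=C_F[\ell^a]$.

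For part (ii), set $A=C_L^{(\ell)}\cong\mathbf Z/\ell^k\mathbf Z$ and let $\sigma$ generate $G=\operatorname{Gal}(L/F)$. Then $\sigma$ acts on $A$ as an element of $(\mathbf Z/\ell^k)^\times$ of order dividing $\ell$, so (since $\ell$ is odd) $\sigma$ is multiplication by some $1+t\ell^{k-1}$ with $t\in\mathbf Z/\ell$. A short binomial expansion shows that $N=\sum_{i=0}^{\ell-1}\sigma^i\equiv\ell\pmod{\ell^k}$ regardless of $t$: the first-order correction is $t\ell^{k-1}\binom{\ell}{2}$, a multiple of $\ell^k$ because $\ell$ is odd, and the higher binomial terms already vanish mod $\ell^k$. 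Hence $N(A)=\ell A$ has order $\ell^{k-1}$, and by the identification above $|j(C_F^{(\ell)})|=\ell^{k-1}$. Since $|j(C_F^{(\ell)})|\le|C_F^{(\ell)}|=\ell^f$, we get $\ell^{k-1}\le\ell^f$, which combined with $f<k$ forces $f=k-1$ and also shows that $j$ is injective on $C_F^{(\ell)}$; injectivity on the prime-to-$\ell$ part is as in (i).

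Part (iii) runs in characteristic $\ell$. In $\mathbf F_\ell[G]$ the identity $\sigma^\ell-1=(\sigma-1)^\ell$ yields $N=(\sigma-1)^{\ell-1}$. On $A\cong(\mathbf Z/\ell)^2$, either $\sigma$ acts trivially (so $\sigma-1=0$) or $\sigma-1$ has one-dimensional image (so $(\sigma-1)^2=0$), and for $\ell\ge 3$ both cases give $N=(\sigma-1)^{\ell-1}=0$ on $A$. Norm surjectivity then yields $j(C_F^{(\ell)})=N(A)=0$, and since $C_F^{(\ell)}$ is a quotient of $A$ it is annihilated by $\ell$, so every class of order $\ell$ in $F$ capitulates in $L$. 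The main technical point throughout is the module-theoretic computation of $N$ acting on the $\ell$-Sylow of $C_L$; the no-unramified-subextension hypothesis enters only through norm surjectivity, which is what allows us to translate statements about $N(C_L^{(\ell)})$ into statements about $j(C_F^{(\ell)})$.
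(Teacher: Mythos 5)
Your proof is correct and follows essentially the same route as the paper's: norm surjectivity from the absence of unramified subextensions, the identities $N_{L/F}\circ j=[L:F]$ and $j\circ N_{L/F}=\sum_\sigma \sigma$, and an explicit computation of $\sum_\sigma\sigma$ on the $\ell$-Sylow subgroup of $C_L$ in parts (ii) and (iii). The only cosmetic difference is that in (iii) you use the identity $N=(\sigma-1)^{\ell-1}$ in $\mathbf F_\ell[G]$ where the paper uses the Jordan form of $\sigma$, and you justify norm surjectivity explicitly via $L\cap H_F=F$ where the paper simply asserts it.
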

\begin{proof} (i)  The only possible capitulation occurs
in the $\ell^a$-torsion. Let $A_F$ and $A_L$ be the $\ell$-power parts
of the class groups. Since the norm map from $A_L$ to $A_F$ is surjective,
it must be an isomorphism since the groups have the same order.
Let $I$ represent a class in $A_F$ of order dividing $\ell^a$.
Lifting $I$ to $L$ then taking the norm back to $F$ yields $I^
{\ell^a}$, which is principal. Since the norm is injective,
the image of $I$ in $C_L$ must have been trivial.

(ii) The case $k=1$ is trivial, so assume $k>1$.
The map $C_F\to C_L$ is injective on the non-$\ell$ parts,
so we restrict our attention to $A_F$ and $A_L$. By assumption,
$A_L\simeq \mathbf  Z/\ell^k\mathbf  Z$. A generator $\sigma$ of $\mbox{Gal}(L/F)$
acts on $\mathbf  Z/\ell^k\mathbf  Z$ as an automorphism of order 1 or $\ell$,
hence by multiplication by $1+a\ell^{k-1}$ for some $a$. It follows
easily that $N=1+\sigma+\sigma^2+\cdots +\sigma^{\ell-1}$ acts
as multiplication by $\ell$. But $N$ is the composition of the two
maps
$$
A_L\to A_F \to A_L,
$$
where the first map is the norm and the second is the natural map
on class groups.
Since the image of $N$ has order $\ell^{k-1}$ and $A_F$ is
assumed to have order
at most this large, $A_F$ has order exactly $\ell^{k-1}$. 
Also, $A_F\to A_L$ must be an injection.

(iii) Let $\sigma$ generate $\text{Gal}(L/F)$. Then $\sigma$ is a linear transformation of 
the $\ell$-part of the class group of $L$, which is 
a $\mathbf Z/\ell \mathbf Z$ vector space. The Jordan canonical 
form of $\sigma$ is $M=\begin{pmatrix} 1 & b \\ 0 & 1\end{pmatrix}$ for some $b$.
Therefore, 
$$
1+\sigma+\cdots +\sigma^{\ell-1} = I + M + \cdots +M^{\ell-1}\equiv 0\pmod {\ell},
$$
so $1+\sigma+\cdots +\sigma^{\ell-1}$ annihilates the classes of $L$ of order $\ell$. 
But $1+\sigma+\cdots +\sigma^{\ell-1}$ is the norm map followed by the natural map from the class group
of $F$ to the class group of $L$. Since $L/F$ is totally ramified, the norm map is surjective.
Therefore, all classes of order $\ell$ in $F$ must become principal in $L$. 
\end{proof}

\section{Galois module methods}

In this section, we introduce a method based on \cite{schoofplus} that is much more
powerful than those of the previous section when working with totally real abelian fields of prime conductor. 

Let $p$ be prime, let $\ell\equiv 1\pmod p$ also be prime, and let 
$G=\text{Gal}(\mathbf Q(\zeta_{\ell})^+/\mathbf Q)$. Let $\pi$ be
the maximal subgroup of $G$ of $p$-power order and $\Delta$ the maximal subgroup
of $G$ of order prime to $p$. Then $G=\pi\times\Delta$. Let $p^n$ be the order
of $\pi$.

Let
$\chi: \Delta\to \overline{\mathbf Q}_p^{\times}
$ be a $p$-adic valued Dirichlet character and let ${\cal O}_{\chi}$ be the extension
of $\mathbf Z_p$ generated by the values of $\chi$. It is a $\mathbf Z[\Delta]$-algebra
via $\delta\cdot x=\chi(\delta)x$ for $\delta\in\Delta$ and $x\in {\cal O}_{\chi}$. For
any $\mathbf Z[G]$-module $M$, define its $\chi$-eigenspace to be
$$
M(\chi)=M\otimes_{\mathbf Z[\Delta]}\, {\cal O}_{\chi}.
$$
The functor $M\mapsto M(\chi)$ is exact. We have that
$$
M\simeq \prod_{\chi}M(\chi),
$$
where $\chi$ runs through representatives for the 
$\text{Gal}(\overline{\mathbf Q}_p/\mathbf Q_p)$-conjugacy classes of characters
of $\Delta$. The eigenspace $M(\chi)$ has the natural structure of an
${\cal O}_{\chi}[\pi]$-module. Moreover,
$$
{\cal O}_{\chi}[\pi]\simeq {\cal O}_{\chi}[[T]]/(\omega_n(T)),
$$
where $1+T$ corresponds to the choice of a generator of $\pi$
and $\omega_n(T)=(1+T)^{p^n}-1$.

Let $F$ be the fixed field of $\pi$. So $\Delta$ is identified 
with $\text{Gal}(F/\mathbf Q)$. Let $H\subseteq \Delta$ be the kernel 
of $\chi$ and let $K\subseteq F$ be the fixed
field of $H$. 

Let $A_K$ denote the $p$-Sylow subgroup of the class group of $K$, and similarly for
other fields.

\begin{lemma} The natural map $A_K\to A_F$ yields an isomorphism
$A_K(\chi)\simeq A_F(\chi)$.\end{lemma}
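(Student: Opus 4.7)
The plan is to reduce the statement to a standard averaging argument over $H = \text{Gal}(F/K)$, which works because $|H|$ divides $|\Delta|$ and is therefore prime to $p$. Throughout I freely use that $A_K$ and $A_F$ are finite $p$-groups, so any integer prime to $p$ acts invertibly on them.

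The first step would be to show that the natural extension-of-scalars map $j: A_K \to A_F$ induces an isomorphism onto the $H$-invariants $A_F^H$. The standard identities $N_{F/K} \circ j = [F:K]$ on $A_K$ and $j \circ N_{F/K} = \sum_{h \in H} h$ on $A_F$ combined with the invertibility of $|H| = [F:K]$ give both the injectivity of $j$ and the fact that every $H$-fixed element $\alpha \in A_F^H$ satisfies $\alpha = j\bigl(|H|^{-1} N_{F/K}(\alpha)\bigr)$. Together with the trivial inclusion $j(A_K) \subseteq A_F^H$, this yields $j: A_K \xrightarrow{\ \sim\ } A_F^H$.

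The second step is to take $\chi$-eigenspaces. Because $|\Delta|$ is prime to $p$, the functor $M \mapsto M(\chi)$ is exact and commutes with taking $H$-invariants, and $A_F$ splits as $\bigoplus_\psi A_F(\psi)$ where $\psi$ ranges over $\text{Gal}(\overline{\mathbf{Q}}_p/\mathbf{Q}_p)$-conjugacy classes of characters of $\Delta$. Since $H$ acts on $A_F(\psi)$ through the scalar $\psi(h)$, we get $A_F^H = \bigoplus_{\psi|_H = 1} A_F(\psi)$. The given character $\chi$ satisfies $\chi|_H = 1$ by the definition $H = \ker \chi$, so $A_F(\chi)$ is one of these summands. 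Meanwhile $A_K$ is a $\mathbf{Z}[\Delta/H]$-module and $\chi$ factors through $\Delta/H$, so $A_K(\chi)$ is defined. Applying the exact functor $(-)(\chi)$ to the isomorphism $A_K \cong A_F^H$ yields $A_K(\chi) \cong A_F^H(\chi) = A_F(\chi)$.

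The only substantive ingredient is the identification $j(A_K) = A_F^H$, which rests entirely on the coprimality of $|H|$ with $p$; the rest is formal bookkeeping with the semisimple decomposition of $\mathbf{Z}_p[\Delta]$-modules. I do not expect any serious obstacle beyond verifying that the norm-inclusion composition formulas apply verbatim to the $p$-Sylow subgroups of the ideal class groups, which is standard.
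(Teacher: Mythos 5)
Your argument is correct and is essentially the paper's own proof: both rest on the two composition identities $N_{F/K}\circ j=|H|$ and $j\circ N_{F/K}=\sum_{h\in H}h$, the invertibility of $|H|$ on $p$-groups, and the fact that $\chi(H)=1$ makes $\sum_{h\in H}h$ act as $|H|$ on the $\chi$-eigenspace. The only cosmetic difference is that you first identify $A_K\simeq A_F^H$ and then apply the exact functor $(-)(\chi)$, whereas the paper runs the same two compositions directly on the eigenspaces.
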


\begin{proof}  Let $N: A_F(\chi)\to A_K(\chi)$ be the norm map. The natural map
$A_K(\chi)\to A_F(\chi)$ followed by $N$ is the $|H|$-th power map. Since $p\nmid |H|$,
this is an injection, so $A_K(\chi)\to A_F(\chi)$ is injective. Since $\chi(H)=1$,
the map $A_F(\chi)\to A_F(\chi)$ given by $N$ followed by the natural map
from $A_K(\chi)$ to $A_F(\chi)$ is the $|H|$-th power map, so
$A_K(\chi)\to A_F(\chi)$ is surjective. 
\end{proof}

{\bf Remark.} The lemma holds more generally if $K$ is replaced by any field
between $K$ and $F$.

In the lemma, we can, for example, take $\chi$ to be trivial.
We find that $A_F(1)$ is trivial. Henceforth, we assume that $\chi\ne 1$. 
If we take $\chi$ to be quadratic 
of conductor $\ell\equiv
1\pmod 4$ and let $p$ be odd, we find
that $A_F(\chi)$ is isomorphic to the $p$-Sylow subgroup of the class group
of $K=\mathbf Q(\sqrt{\ell})$. 

\begin{proposition} Let 
$$
V=\text{Ker}\left(A_F\to A_{\mathbf Q(\zeta_{\ell})^+}\right).
$$
If $\chi\ne 1$, then
$$
V(\chi)\simeq H^1(\text{Gal}(\mathbf Q(\zeta_{\ell})^+/F),\, 
E_{\mathbf Q(\zeta_{\ell})^+})(\chi).
$$
\end{proposition}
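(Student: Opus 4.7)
The plan is to apply the Corollary to Proposition~1 (Schmithals) to $L/F$, where $L = \mathbf{Q}(\zeta_\ell)^+$, pass to $p$-parts and $\chi$-eigenspaces, and finally show that the local obstructions defining $H_{lt}^1$ vanish after taking $\chi$-components. The corollary gives $\text{Ker}(C_F \to C_L) \simeq H_{lt}^1(L/F, E_L)$, and this isomorphism is $\Delta$-equivariant because $G = \pi \times \Delta$ is a direct product, so the $\Delta$-action on $E_L$ commutes with $\pi$ and the cocycle-to-ideal-class construction of Proposition~1 is natural in both factors. Since $|\Delta|$ is prime to $p$, taking $p$-parts and then $\chi$-components is exact on these finite $\mathbf{Z}[\Delta]$-modules, yielding
$$V(\chi) \simeq H_{lt}^1(L/F, E_L)(\chi).$$

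The defining exact sequence
$$0 \to H_{lt}^1(L/F, E_L) \to H^1(\pi, E_L) \to \prod_{\mathfrak{p}} H^1(\text{Gal}(L_\mathfrak{p}/F_\mathfrak{p}), U_{L_\mathfrak{p}})$$
reduces the claim to showing that the $\chi$-component of the rightmost term is zero. Because $L/\mathbf{Q}$ is unramified away from $\ell$ and $F/\mathbf{Q}$ is totally ramified at $\ell$, the extension $L/F$ is unramified outside the unique prime $\mathfrak{l}$ of $F$ above $\ell$, and is totally tamely ramified of degree $p^n$ at $\mathfrak{l}$ (tame since $p \neq \ell$, as $p \mid \ell-1$). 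For $\mathfrak{p} \neq \mathfrak{l}$ the classical vanishing $H^1(\text{Gal}(L_\mathfrak{p}/F_\mathfrak{p}), U_{L_\mathfrak{p}}) = 0$ for unramified local extensions (via the filtration by higher unit groups, Hilbert~90 for residue fields, and the normal basis theorem on the additive graded pieces) disposes of these contributions.

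At $\mathfrak{l}$ the local Galois group is all of $\pi$. The principal units $U^{(1)}_{L_\mathfrak{l}}$ form a $\mathbf{Z}_\ell$-module, and since $|\pi| = p^n$ is a unit in $\mathbf{Z}_\ell$, one has $H^1(\pi, U^{(1)}_{L_\mathfrak{l}}) = 0$, so $H^1(\pi, U_{L_\mathfrak{l}}) \simeq H^1(\pi, k_{L_\mathfrak{l}}^{\times})$. Total ramification gives $k_{L_\mathfrak{l}} = \mathbf{F}_\ell$ with trivial $\pi$-action, and for cyclic $\pi$ of order $p^n$ this last group equals $\mathbf{F}_\ell^{\times}[p^n] \simeq \mu_{p^n}$. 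The decisive point is the $\Delta$-module structure: conjugation of $\Delta$ on $\pi$ is trivial because $G$ is abelian, and $\Delta \subset G$ acts trivially on $\mathbf{F}_\ell$ because the total ramification of $L/\mathbf{Q}$ at $\ell$ forces $G$ to lie inside the inertia group. Hence $\Delta$ acts trivially on this local cohomology group. For a trivial $\Delta$-module and any nontrivial $\chi$, the $\chi$-component is zero: one picks $\delta \in \Delta$ with $\chi(\delta) \neq 1$, and $1 - \chi(\delta)$ is a unit in $\mathcal{O}_\chi$ (its norm down to $\mathbf{Z}_p$ is $\Phi_q(1)$ for $q$ the order of $\chi(\delta)$, which is either $1$ or a prime distinct from $p$). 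This gives the desired vanishing and therefore $V(\chi) \simeq H^1(\text{Gal}(\mathbf{Q}(\zeta_\ell)^+/F), E_{\mathbf{Q}(\zeta_\ell)^+})(\chi)$.

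The main obstacle I anticipate is the careful bookkeeping of the $\Delta$-action throughout, particularly verifying that Schmithals' isomorphism is $\Delta$-equivariant and that the identification $H^1(\pi, U_{L_\mathfrak{l}}) \simeq \mu_{p^n}$ really inherits a trivial $\Delta$-action; the local cohomology computations themselves are classical.
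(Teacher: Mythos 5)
Your proof is correct and follows essentially the same route as the paper: both identify $V$ with the locally trivial cohomology group via the Corollary to Proposition 1 and then observe that $\Delta$ acts trivially on the local terms, so they die in the $\chi$-component for $\chi\ne 1$. The only difference is cosmetic: the paper computes $H^1(\pi, U_{\mathfrak q})\simeq \mathbf Z/e\mathbf Z$ uniformly for all primes from the valuation exact sequence and local Hilbert 90, whereas you treat the unramified primes by the classical vanishing and the totally ramified prime by the filtration of units by principal units.
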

\begin{proof} Let $\mathfrak p$ be a prime of $F$ and let $\mathfrak q$
be a prime of $\mathbf Q(\zeta_{\ell})^+$ above $\mathfrak p$.
The exact sequence $0\to U_{\mathbf Q(\zeta_{\ell})_{\mathfrak q}^+} 
\to \mathbf (Q(\zeta_{\ell})^+_{\mathfrak q})^{\times}
\to \mathbf Z\to 0$ yields the exact sequence
$$
K_{\mathfrak p}^{\times} \to \mathbf Z \to H^1(\pi, U_{\mathbf
Q(\zeta_{\ell})^+_{\mathfrak q}})\to 0.
$$
Since the image of the valuation map on $K_{\mathfrak p}^{\times}$ is $e\mathbf Z$,
we see that 
$$H^1(\pi, U_{\mathbf
Q(\zeta_{\ell})^+_{\mathfrak q}})\simeq \mathbf Z/e\mathbf Z,$$
where $e$ is the ramification index of $\mathfrak q$ over $\mathfrak p$, and the 
action of $\Delta$
on this cohomology group is trivial.

We know from the Corollary to Proposition 1 that $V$ is given by 
locally trivial cohomology
classes. 
Since $\mathbf Q(\zeta_{\ell})^+/F$ is ramified only at $\ell$,
and the ramification index there is $p^n$,
we have  
$$
V\simeq \text{\it Ker}\left(H^1(\pi, E_{\mathbf Q(\zeta_{\ell})^+})\to \mathbf Z/p^n\mathbf Z)\right),
$$
The result follows. 
\end{proof}

Let $Cycl$ denote the cyclotomic units of $\mathbf Q(\zeta_{\ell})^+$, namely,
the group generated by elements of the form $(\zeta_{\ell}^a-\zeta_{\ell}^{-a})
/(\zeta_{\ell}^b-\zeta_{\ell}^{-b})$. Let 
$$
B=E_{\mathbf Q(\zeta_{\ell})^+}/Cycl.
$$
Let $I$ be the augmentation ideal of  $\mathbf Z[G]$. 
The exact sequence 
$$
0\to I\to \mathbf Z[G]\to \mathbf Z\to 0
$$
implies that there is an isomorphism of $\Delta$-modules
$$
\widehat{H}^q(\pi, I)\simeq \widehat{H}^{q-1}(\pi, \mathbf Z)
$$
for all $q$, where $\widehat{H}$ denotes a Tate cohomology group.
Since $G$ is commutative and $\pi$ and $\Delta$ have coprime orders,
the group $\Delta$ acts trivially on $\widehat{H}^{q-1}(\pi, \mathbf Z)$
and on $\widehat{H}^q(\pi, \{\pm 1\})$ (\cite[Lemma 1.1]{schoofminus}).
Therefore, if $\chi\ne 1$, 
$$
\widehat{H}^q(\pi, I)(\chi)\simeq \widehat{H}^{q-1}(\pi, \mathbf Z)(\chi)=0$$
and $\widehat{H}^{q}(\pi,\{\pm 1\})(\chi)=0$ for all $q$.
The inverse of the map $I\to Cycl/\{\pm 1\}$ given by
$$
\sigma -1\mapsto \frac{\sigma(\zeta-\zeta^{-1})}{\zeta-\zeta^{-1}}
$$
yields an exact sequence (see \cite[Proposition 8.11]{washington})
$$
0\to \{\pm 1\}\to Cycl \to I \to 0.      
$$
This implies that $\widehat{H}^1(\pi, Cycl)(\chi)=0$ for all $q$.
It follows, when $\chi\ne 1$, that
$$
V(\chi)\simeq H^1(\pi, E_{\mathbf Q(\zeta_{\ell})^+})(\chi)\simeq H^1(\pi, B)(\chi).
$$

For a finite Galois module $M$, let $M^d=\text{Hom}_{\mathbf Z}(M,\mathbf Q/\mathbf Z)$ be 
the dual of $M$. The Galois action is given by $(\sigma f)(m)=f(\sigma^{-1}m)$,
so the pairing between $M$ and $M^d$ induces a nondegenerate pairing
between $M(\chi)$ and $M^d(\chi^{-1})$.
Duality theory (\cite{iyanaga}) tells us that 
that $H^1(\pi, B)(\chi)=H^1(\pi, B(\chi))$ is dual 
to $H_1(\pi, B^d(\chi^{-1}))$. This latter group
equals $\widehat{H}^{-2}(\pi, B^d(\chi^{-1}))$, which is isomorphic to 
$\widehat{H}^0(\pi, B^d(\chi^{-1}))$. Therefore, 
$$
|V(\chi)| = |\widehat{H}^0(\pi, B^d(\chi^{-1}))|.
$$

Note that $B^d(\chi^{-1})$ is a module over 
${\cal O}_{\chi}[\pi]\simeq {\cal O}_{\chi}[[T]]/
(\omega_n(T))$. In \cite{schoofplus}, it is shown how to use cyclotomic units to
compute an ideal 
$I_{\chi^{-1}}\subseteq {\cal O}_{\chi} [[T]]$ such that 
$$
B^d(\chi^{-1})\simeq {\cal O}_{\chi}[[T]]/I_{\chi^{-1}}.
$$
\begin{theorem}
Let $\chi\ne 1$. Then
$V(\chi)$ is dual to
$$
\{ f\in {\cal O}_{\chi}[[T]] \; \big| \; Tf\in I_{\chi^{-1}}\}/\left(I_{\chi^{-1}}
+(\omega_n(T)/T)\right).
$$
\end{theorem}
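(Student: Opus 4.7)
The plan is to chain together the isomorphisms assembled in the paragraphs above with a single explicit Tate cohomology computation. Everything prior to the theorem combines, for $\chi\ne 1$, to give an isomorphism $V(\chi)\simeq H^1(\pi,B)(\chi)$, a duality between $H^1(\pi,B)(\chi)$ and $\widehat H^0(\pi,B^d(\chi^{-1}))$, and an isomorphism $B^d(\chi^{-1})\simeq \mathcal O_\chi[[T]]/I_{\chi^{-1}}$ of $\mathcal O_\chi[\pi]$-modules, where $\mathcal O_\chi[\pi]\simeq \mathcal O_\chi[[T]]/(\omega_n(T))$ with a fixed generator of $\pi$ corresponding to $1+T$. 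So the theorem will follow once I identify $\widehat H^0(\pi,M)$, for $M=\mathcal O_\chi[[T]]/I_{\chi^{-1}}$, with the quotient displayed in the statement and then dualize.

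First I would note that $\omega_n(T)\in I_{\chi^{-1}}$, since $\omega_n(T)$ must annihilate $M$ for the $\mathcal O_\chi[\pi]$-module structure to be well defined. Letting $\sigma$ be the generator of $\pi$ corresponding to $1+T$, the element $\sigma-1$ acts on $M$ as multiplication by $T$, whereas the norm element
$$N_\pi \;=\; 1+\sigma+\cdots+\sigma^{p^n-1} \;=\; \sum_{i=0}^{p^n-1}(1+T)^i \;=\; \frac{(1+T)^{p^n}-1}{(1+T)-1} \;=\; \frac{\omega_n(T)}{T}$$
acts as multiplication by the polynomial $\omega_n(T)/T$. Consequently
$$M^\pi \;=\; \{f\in\mathcal O_\chi[[T]]:Tf\in I_{\chi^{-1}}\}\big/I_{\chi^{-1}}, \qquad N_\pi M \;=\; \bigl((\omega_n(T)/T)\mathcal O_\chi[[T]]+I_{\chi^{-1}}\bigr)\big/I_{\chi^{-1}},$$
and $\widehat H^0(\pi,M)=M^\pi/N_\pi M$ is exactly the group appearing in the theorem.

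The argument is essentially a bookkeeping exercise rather than a new idea: the substance of the theorem was done in the preceding pages, where $V(\chi)$ was shown to be dual to a Tate cohomology group and $B^d(\chi^{-1})$ was given the explicit presentation $\mathcal O_\chi[[T]]/I_{\chi^{-1}}$ from \cite{schoofplus}. The only mild obstacle is making sure all of the identifications respect the splitting into $\chi$-eigenspaces, so that the cohomology is computed in the correct piece; this is automatic because $\Delta$ has order prime to $p$ and taking $\chi$-components is an exact functor on $\mathbf Z_p[\Delta]$-modules that commutes with both Tate cohomology and with Pontryagin duality.
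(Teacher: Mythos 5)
Your proposal is correct and follows essentially the same route as the paper: the preceding text already reduces the statement to computing $\widehat{H}^0(\pi, B^d(\chi^{-1}))$ with $B^d(\chi^{-1})\simeq {\cal O}_{\chi}[[T]]/I_{\chi^{-1}}$, and the paper's proof likewise just observes that $\pi$-invariants are the $f$ with $Tf\in I_{\chi^{-1}}$ and that the norm acts as $\omega_n(T)/T$. Your write-up merely spells out the geometric-series identity for the norm element and the eigenspace compatibilities a bit more explicitly.
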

\begin{proof}
Since $1+T$ is a generator of $\pi$, an element $f\in B^d(\chi^{-1})$ is 
fixed by $\pi$ if and only if $Tf=0$.  The norm for $\pi$ is given by
$\omega_n(T)/T$. Since $\widehat{H}^0$ is given by fixed elements modulo
norms, the result follows. 
\end{proof}

\begin{corollary} All of $A_K\simeq A_F(\chi)$ capitulates in $\mathbf Q(\zeta_{\ell})^+$
if and only if $\omega_n(T)/T\in I_{\chi^{-1}}$.
\end{corollary}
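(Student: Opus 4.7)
The plan is to use the Theorem to compute $|V(\chi)|$, use the surjectivity of the norm in the totally ramified extension $\mathbf Q(\zeta_\ell)^+/F$ together with the structural isomorphism from \cite{schoofplus} to compute $|A_F(\chi)|$, and then compare them.

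Write $R = \mathcal O_\chi[[T]]$, $J = I_{\chi^{-1}}$, $\nu = \omega_n(T)/T$, and $M = R/J \simeq B^d(\chi^{-1})$. The Theorem identifies $V(\chi)^d$ with the quotient $\{f : Tf \in J\}/(J + (\nu)) = M^\pi/\nu M$, so $|V(\chi)| = |M^\pi/\nu M|$. Full capitulation of $A_K \simeq A_F(\chi)$ means $V(\chi) = A_F(\chi)$, and since this is an inclusion of finite abelian groups, it is equivalent to $|V(\chi)| = |A_F(\chi)|$.

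To compute $|A_F(\chi)|$, I would use the norm $N : A_{\mathbf Q(\zeta_\ell)^+}(\chi) \to A_F(\chi)$, which is surjective since $\mathbf Q(\zeta_\ell)^+/F$ is totally ramified at the unique prime above $\ell$. The composition $\iota \circ N$ of the lift $\iota$ with $N$ acts on $A_{\mathbf Q(\zeta_\ell)^+}(\chi)$ as $N_\pi = \nu$, so the image of $\iota : A_F(\chi) \to A_{\mathbf Q(\zeta_\ell)^+}(\chi)$ is exactly $\nu \cdot A_{\mathbf Q(\zeta_\ell)^+}(\chi)$. Invoking the structural identification $A_{\mathbf Q(\zeta_\ell)^+}(\chi) \simeq M$ as $R$-modules from \cite{schoofplus}, this image has cardinality $|\nu M|$, and hence
$$|A_F(\chi)| \,=\, |V(\chi)| \cdot |\text{image}(\iota)| \,=\, |M^\pi/\nu M| \cdot |\nu M| \,=\, |M^\pi|.$$

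Combining the two computations, $V(\chi) = A_F(\chi)$ iff $|M^\pi/\nu M| = |M^\pi|$, iff $\nu M = 0$, iff $\nu \in \text{Ann}_R(M) = J$ (using that $M = R/J$ is cyclic over $R$ with annihilator $J$), i.e., $\omega_n(T)/T \in I_{\chi^{-1}}$. The main obstacle is the structural identification $A_{\mathbf Q(\zeta_\ell)^+}(\chi) \simeq R/I_{\chi^{-1}}$ as $R$-modules, which is the substantive content of \cite{schoofplus}; once this is granted, the remainder is a direct cardinality comparison combined with the norm-image computation afforded by total ramification.
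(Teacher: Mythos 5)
Your cardinality bookkeeping is sound and your endpoint $|A_F(\chi)|=|M^\pi|$ is exactly what the paper establishes, but the step you describe as ``the substantive content of \cite{schoofplus}'' is not available: what \cite{schoofplus} computes is the $\mathcal O_\chi[[T]]$-module structure of $B^d(\chi^{-1})$, where $B=E_{\mathbf Q(\zeta_\ell)^+}/Cycl$ is the group of units modulo cyclotomic units, \emph{not} of the class group $A_{\mathbf Q(\zeta_\ell)^+}(\chi)$. The only bridge between the two that is actually known, and the only one the paper uses, is the Mazur--Wiles (Gras conjecture) equality of orders $|A_F(\chi)|=|B_F(\chi)|$ at the level of the base field $F$. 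An isomorphism $A_{\mathbf Q(\zeta_\ell)^+}(\chi)\simeq \mathcal O_\chi[[T]]/I_{\chi^{-1}}$ of $\mathcal O_\chi[\pi]$-modules is a far stronger assertion --- it would in particular force $A_{\mathbf Q(\zeta_\ell)^+}(\chi)$ to be a cyclic $\mathcal O_\chi[[T]]$-module --- and it is not proved in \cite{schoofplus} or elsewhere; it is precisely the kind of module-theoretic refinement of the main conjecture that remains open. Consequently your computation $|\mathrm{image}(\iota)|=|\nu M|$, and with it your derivation of $|A_F(\chi)|=|M^\pi|$, is unjustified.

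The paper reaches the same identity without ever invoking the Galois-module structure of the class group of the top field. It stays on the unit side throughout: $V(\chi)\simeq H^1(\pi,B)(\chi)$ is dual to $\widehat H^0(\pi,B^d(\chi^{-1}))=M^\pi/\nu M$ (your Theorem step); then $|A_F(\chi)|=|B_F(\chi)|$ by \cite{mw}, $B_F(\chi)$ is the $\pi$-invariant subgroup of $B(\chi)$ by \cite[Prop.~5.1]{schoofplus}, that invariant subgroup is dual to $M/TM$, and $|M/TM|=|M[T]|=|M^\pi|$ because multiplication by $T$ on a finite module has kernel and cokernel of equal order. From there the comparison $|M^\pi/\nu M|=|M^\pi|$ iff $\nu M=0$ iff $\nu\in I_{\chi^{-1}}$ is the same as yours. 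To salvage your norm-theoretic framing you would have to carry it out for $B$ rather than for the class group of $\mathbf Q(\zeta_\ell)^+$.
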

\begin{proof} Let $B_F$ be the units of $F$ modulo the cyclotomic units
of $F$. It is known (\cite{mw}) that $|A_F(\chi)|=|B_F(\chi)|$.
But $B_F(\chi)$ is isomorphic to the $\pi$-invariant subgroup of $B(\chi)$ (\cite[Prop. 5.1 (i)]{schoofplus}).
Let $\sigma$ generate $\pi$.
Under the pairing between $B(\chi)$ and $B^d(\chi^{-1})$, the annihilator
of $(1-\sigma)B^d(\chi^{-1})$ is the $\pi$-invariant subgroup of $B(\chi)$.
Therefore, $B_F(\chi)$ is dual to, and therefore has the same order as, 
$B^d(\chi^{-1})/(1-\sigma)B^d(\chi^{-1})$, 
which is the maximal 
quotient of $B^d(\chi^{-1})$ on which $\pi$
acts trivially. This is isomorphic to ${\cal O}_{\chi}[[T]]/(I_{\chi^{-1}}+(T))$,
which has the same order as $\{f\in{\cal O}_{\chi}[[T]] \, | \, Tf\in 
I_{\chi^{-1}}\}/I_{\chi^{-1}}$
(proof: since ${\cal O}_{\chi}[[T]]/I_{\chi^{-1}}$ is finite, the kernel and cokernel of multiplication
by $T$ have the same order). 
Therefore, the order of $A_F(\chi)$ equals the order of $V(\chi)$ 
if and only if $\omega_n(T)/T\in I_{\chi^{-1}}$.
\end{proof}

The ideals $I_{\chi^{-1}}$ for small $p$ and for $\ell < 10000$ are listed in 
\cite[Tables 4.3 and 4.4]{schoofplus}.
We give three examples of how to apply Theorem 1.

\medskip
\noindent{\bf Example 1.} $l=2089$, $p=3$, and $\chi$ quadratic. In this case
$\pi$ has order~9 and the ring $O_{\chi}$ is equal to $\mathbf Z_3$. 
In \cite[Table 4.4]{schoofplus},
we find that $I_{\chi^{-1}}=(T-3,27)$. This implies that the 3-part of the class group
of the quadratic field has order~3, namely, the order of 
${\cal O}_{\chi}[[T]]/(I_{\chi^{-1}}+(T))$. 
We have that
$T\cdot f\in I_{\chi^{-1}}$ if and only if $9$ divides $f(3)$. These power series form
an ideal of index 9 in $\mathbf Z_3[[T]]$. On the other hand, ${{\omega_2(T)}/T}$
is congruent to $(4^9-1)/3=9\cdot 3059$ modulo~$I_{\chi^{-1}}$, so that the ideal
$I_{\chi^{-1}}+({{\omega_2(T)}/ T})$ has index 9 as well. It follows that the module
in Theorem~1 is {\it trivial}. Therefore there is no capitulation.

\medskip
\noindent{\bf Example 2.} $l=7489$, $p=2$ and $\chi$ is cubic. In this case $\pi$
has order 32 and the ring $O_{\chi}$ is the ring~$\mathbf Z_2[\zeta]$ where $\zeta$
denotes a cube root of unity. By \cite[Table 4.4]{schoofplus}, 
the ideal $I_{\chi^{-1}}$ is equal to
$(T+2+4\zeta,\, 8)$. This implies that the 2-part of the class group of the cubic
field has order~4. Since 
$$
{\omega_5(T)}/T\equiv ((-1-4\zeta)^{32}-1)/(-2-4\zeta)\equiv 0\pmod{I_{\chi^{-1}}},
$$ 
all classes capitulate.

\medskip
\noindent{\bf Example 3.} $l=9337$, $p=2$ and $\chi$ is cubic. In this case $\pi$
has order 4 and the ring $O_{\chi}$ is as in the previous example. 
By \cite[Table 4.4]{schoofplus}, the
ideal $I_{\chi^{-1}}$ is equal to $(T+4-2\zeta,8)$. Once again the 2-part of the class
group of the cubic field has order~4. We have that
$T\cdot f\in I_{\chi^{-1}}$ if and only if $4$ divides $f(2\zeta)$. These power series
form an ideal of index 16 in~$O_{\chi}[[T]]$. On the other hand,
${{\omega_2(T)}/ T}$ is congruent to
$((-3+2\zeta)^4-1)/(-4+2\zeta)\equiv 4\zeta\pmod{I_{\chi^{-1}}}$.
Therefore the ideal $I_{\chi^{-1}}+({{\omega_2(T)}/ T})$ also has index~16, the
module of Theorem~1 is trivial, and there is no capitulation.

\section{Numerical data}

\subsection{Imaginary quadratic fields}

Consider imaginary quadratic fields 
$\mathbf  Q(\sqrt{-d})$, where $-d$ is the discriminant.
There
are 31 examples with $d<100$. Of these, 8 have trivial class groups,
14 have nontrivial class groups that completely capitulate,
and the remaining 9 fields have elements in their class groups that do not
capitulate. As pointed out above, for sufficiently large $d$
there will always be some elements of the class group 
that do not capitulate. In fact, most elements will not capitulate.

\subsection{Real quadratic fields; $\ell=3$}

In Section 4, we saw that capitulation was fairly predictable
for imaginary quadratic fields. It was restricted to the 4-torsion
of the class group, and a lot of it could be explained by genus theory.
The situation for real quadratic fields seems to be entirely different.

The ideal classes of order 2 capitulate in the cyclotomic field,
as we showed in Section 4. However, Proposition 4
gives us no additional information. In fact, it seems that
there is no easy way to predict when capitulation occurs.

Consider the 3-parts of the class groups for
fields $K=\mathbf  Q(\sqrt{\ell})$, where $\ell\equiv 1\pmod {4}$ is prime.

There are 52 primes $\ell<10000$ with $\ell\equiv 5\pmod{12}$ and such that
the 3-class group of $\mathbf  Q(\sqrt{\ell})$ is nontrivial. Since $3\nmid \phi(\ell)/2$,
the ideal classes of order 3 do not capitulate.

On the other hand, when $\ell\equiv 1\pmod{12}$, the ideal classes of order 3 (and 
sometimes those of orders 9, 27, ...) have potential
capitulation. 

Here are the results of some computations.

\begin{center}
\begin{tabular}{rl}
\multicolumn{2}{l}{\rule[-2mm]{0mm}{2mm}\!\!$\mathbf  Q(\sqrt{\ell}), \quad \ell\equiv 1\pmod{12}, \quad \ell<10000$}\\
\hline\rule[5mm]{0mm}{0mm}
32:& \quad  \mbox{ non-trivial 3-class group}\\
26:& \quad \mbox{ maximal 3-capitulation}\\
6:&\quad \mbox{ no 3-capitulation}
\end{tabular}
\end{center}

There seems to be no reason to expect that these are
small exceptional cases. In fact, a suitable modification of the
Cohen-Lenstra heuristics presumably would predict that
certain cases of capitulation (covered by part (i) of Lemma 4) 
and of non-capitulation (covered 
by part (ii) of Lemma 4)
occur with positive density. 

A reasonable prediction from the data is that capitulation
is fairly common, and probably fairly random, for
cases of potential capitulation. 

Here are the details of the computations. They were carried out in PARI.

Part (i) of Lemma 4 shows that there is maximal 3-power capitulation
for the following primes: 
$$\begin{array}{cccccccccc}
\phantom{2}229,& \phantom{2}733,& 1129,& 1489,& 2557,& 2677,& 2713,& 2857,& 2917,& 3877,\\
3889,&  4597,& 4729,& 5521,& 5821,& 
6133,& 6997,& 7057,& 7537,& 7573,\\
7753,& 8713,& 9133.\end{array}
$$
All of the capitulation in these examples occurs in the sextic
subfield of $\mathbf  Q(\zeta_\ell)$.

Part (ii) of Lemma 4 shows that there is no 3-power capitulation
for the following primes:
$$ 3229,\quad 5281,\quad 6637,\quad 8017,\quad 8581.
$$
In each of these examples, the 3-part of the 
class group of the sextic field
$L$ is cyclic and $3\nmid [\mathbf  Q(\zeta_\ell):L]$,
so no capitulation can occur from $L$ to $\mathbf Q(\zeta_{\ell})$.

There are four primes remaining: 2089, 4933, 7873, 8761.
They can be treated by the methods of Section 5. We find the
following:

{\bf 2089}: The class group of $K$ is $\mathbf  Z/3\mathbf  Z$. There is no capitulation. 

{\bf 4933}: The class group of $K$ is $\mathbf  Z/3\mathbf  Z$. It capitulates
in the cyclotomic field.

{\bf 7873}: The class group of $K$ is $\mathbf  Z/9\mathbf  Z$. The capitulation kernel
has order 3, which is maximal capitulation.

{\bf 8761}: The class group of $K$ is $\mathbf  Z/27\mathbf  Z$. The capitulation kernel
has order 3, which is maximal capitulation.

In all of the above examples, the class group of $K$ is cyclic,
and the capitulation, when it occurs, is maximal.
For a non-cyclic class group, consider $\ell=114889$.
The class group is $\mathbf  Z/3\mathbf  Z\times \mathbf  Z/3\mathbf  Z$. 
A calculation with PARI shows that the capitulation in the sextic field,
hence in $\mathbf  Q(\zeta_\ell)$, has order 3. Therefore only part of
the potential capitulation is actual capitulation in this case.

The majority of the above cases have $\ell\equiv 1\pmod{12}$ and $3\| h_K$.
When 3 exactly divides the class number $h_6$ of the sextic subfield of
$\mathbf Q(\zeta_{\ell})$, we are guaranteed
to have capitulation. Computations for $\ell<500000$ yield the following:

\begin{center}
\begin{tabular}{cccc}
\multicolumn{4}{c}{
\rule[-2mm]{0mm}{2mm}\!\!\! $K=\mathbf Q(\sqrt{\ell})$, $\ell\equiv 1\pmod {12}$, and $\ell<500000$}\\
\hline
\rule[5mm]{0mm}{0mm}
Condition on class number:  & $3|h_K$ & $3\| h_K$ & $3\| h_6$\\
Number of fields: & 1343 & 1181 & 787\\
\end{tabular}
\end{center}
This agrees with the philosophy stated earlier as to why capitulation is common
for totally real fields.

\subsection{Cyclic cubic fields; $\ell=2$}

The situation for cyclic cubic fields is similar to that for real quadratic
fields. Potential capitulation is very often actual
capitulation, except for an obstruction that we describe below,
We examined the 611 cyclic cubic fields $K$ of prime conductor $\ell<10000$.
Of these cubic fields, 505 have trivial class groups. Of 
the remaining 106, the
most common (61 occurrences) 
class group is $\mathbf  Z_2\times \mathbf  Z_2$. Since the 2-rank
of the class group must be even (see \cite[Thm. 10.8]{washington}) and the class
number is prime to 3 (see \cite[Thm. 10.4]{washington}), this is the smallest possible
non-trivial class group. We therefore
start by looking at the 2-primary part of the class group.

There are 69 fields with nontrivial 2-primary part of the class group.
All classes of order 2 have potential capitulation since
$\ell-1$ is even. Only three cases considered have classes of
order 4. One of these has $\ell\equiv 3\pmod 4$ and therefore
there is no capitulation (see below). The remaining two primes
(1777 and 4297) have class groups $\mathbf  Z_4\times \mathbf  Z_4$
and these classes have potential capitulation. All 
classes capitulate for 1777, and the capitulation
is $\mathbf  Z_2\times \mathbf  Z_2$ for 4297.

An interesting phenomenon arises. If $\ell\equiv 3\pmod 4$,
then $\mathbf  Q(\zeta_\ell)^+/K$ has odd degree, so no class
of even order capitulates in this subextension. Moreover, the map from the class group
of $\mathbf  Q(\zeta_\ell)^+$ to that of $\mathbf  Q(\zeta_\ell)$ is injective (see
\cite[Thm. 4.14]{washington}). Therefore, the 2-part of the
class group of $K$ does not capitulate in $\mathbf  Q(\zeta_\ell)$.
Of the 69 fields, 34 have $\ell\equiv 3\pmod 4$, hence there is
no capitulation in the 2-part. An interesting question is whether there
is an elliptic curve analogue of this phenomenon.

Of the remaining 35 fields (those with $\ell\equiv 1\pmod 4$), 
six have none of the 
2-part of the class group capitulate, 28 have the entire
2-part capitulate, and one ($\ell=4297$) has partial capitulation. 
The following summarizes the computations:

\begin{center}
\begin{tabular}{rl}
\multicolumn{2}{l}{
\rule[-2mm]{0mm}{2mm}\!\!\! $[K:\mathbf  Q]=3, \quad K\subset \mathbf  Q(\zeta_\ell), \quad \ell<10000$}\\
\hline
\rule[5mm]{0mm}{0mm}
69: &\quad non-trivial 2-class group\\
34: &\quad $\ell\equiv 3\pmod 4$, therefore no 2-capitulation\\
28: &\quad $\ell\equiv 1\pmod 4$, maximal 2-power capitulation\\
1: &\quad $\ell\equiv 1\pmod 4$, partial 2-capitulation\\
6: &\quad $\ell\equiv 1\pmod 4$, no 2-capitulation\\
\end{tabular}
\end{center}

For 23 of the fields
with $\ell\equiv 1\pmod 4$, the 2-parts of the class groups
of both the cubic field $K$ and the sextic subfield of $\mathbf  Q(\zeta_\ell)$ are 
$\mathbf  Z_2\times \mathbf  Z_2$. Part (i) of 
Lemma 4 implies that the 2-part of the class group of
$K$ capitulates. 

The remaining cases are treated by the methods of Section 5.

As in the quadratic case, we extended these calculations to count, for $\ell < 500000$ and $\ell\equiv 1\pmod{12}$,
how often $4\| h_K$ (so the 2-part of the class group is $\mathbf Z/2\mathbf Z\times \mathbf Z/2\mathbf Z$)
and $4\| h_L$. In this case, capitulation is guaranteed by Lemma 4 (i). 
This situation accounts for a majority of
the cases where capitulation occurs.

\begin{center}
\begin{tabular}{cccc}
\multicolumn{4}{c}{
\rule[-2mm]{0mm}{2mm}\!\!\! $[K\, : \, \mathbf Q]=3$, $\ell\equiv 1\pmod{12}$, 
$\ell<500000$}\\
\hline
\rule[5mm]{0mm}{0mm}
Condition on class number: & $2|h_K$ & $4\| h_K$ & $4\| h_L$ \\
Number of fields: & 1447 & 1328 & 933 \\
\end{tabular}
\end{center}
Again, this agrees with the philosophy stated earlier as to why capitulation is common
for totally real fields. 

There are 24 primes less than 10000 where the 7-part of the class group
is non-trivial. Of these, 21 have $\ell\not\equiv 1\pmod 7$,
therefore do not have potential capitulation. The remaining
3 cases have maximal capitulation. The most interesting case is 7351, which
has class group $\mathbf  Z_{49}$ and can be treated by the method of Section 5.

For the remaining cubic fields $K$ with nontrivial class group,
there is no potential capitulation for any ideal classes. This is not surprising
since, for example, we expect only 1/18 of the primes with
19 in the class number of $K$ to have $\ell\equiv 1\pmod {19}$,
and there are not enough examples (7 in the case of 19)
for this to be very likely.

\subsection{Real quadratic fields; $\ell=5$}

There are 259 primes $\ell<500000$ such that $\ell\equiv 1\pmod {20}$ and the class number
of $\mathbf Q(\sqrt{\ell})$ is a divisible by 5. We have the following data:

\begin{center}
\begin{tabular}{rccc}
\multicolumn{4}{c}{
\rule[-2mm]{0mm}{2mm}\!\!\! $K=\mathbf Q(\sqrt{\ell})$, $\ell\equiv 1\pmod{20}$, 
$\ell<500000$}\\
\hline
\rule[5mm]{0mm}{0mm}
259:& \text{non-trivial 5-class group} \\
227:& \text{maximal 5-capitulation} \\
32:& \text{no capitulation}
\end{tabular}
\end{center}

For $\ell<400000$, the quadratic fields with non-trivial 5-class group were found, and then the class group
of the degree 10 subfield of $\mathbf Q(\zeta_{\ell})$ was computed using PARI. In these 203 cases, there
are 168 where the class group of both the quadratic field and the tenth degree field have 5-class group
cyclic of order 5. Therefore, Lemma 4 implies that all classes of order 5 capitulate. Moreover, of the
203 fields with $\ell<400000$, there are 147 such that the entire class group (not just the 5-part)
of the quadratic field is isomorphic to the class group of the tenth degree field.
This agrees with the philosophy that high degree totally real fields tend to have small
class numbers.

For $400000<\ell<500000$, the class number calculations started taking a long time, so the methods
of Section 5 were used to determine the capitulation behavior. They were also used for
the smaller prime $\ell=154501$ to determine that its 5-class group capitulates in $\mathbf Q(\zeta_{154501})$.
These methods are much faster than computing the full class number. This is to be expected, since
we need only the 5-part of the class number (however, calculating the 5-part of the units
modulo cyclotomic units could be used to compute the 5-part of the class number quickly, thus yielding an alternative approach
for the present purposes).

It seems worth mentioning a few details of computing the tenth degree field. For a prime $\ell\equiv 1\pmod {10}$,
the polynomial of the fifth degree subfield of $\mathbf Q(\zeta_{\ell})$ can be computed using formulas
of Tanner and Lehmer \cite{lehmer}. This yields a degree 5 polynomial $P(X)$. The following lemma shows that
$P(X+\sqrt{\ell})P(X-\sqrt{\ell})$, which can be computed numerically to sufficient accuracy and then rounded to
a polynomial with integral coefficients, is then the irreducible tenth degree polynomial that gives the desired field.
This method works well for small primes, but is slow for large primes $\ell$, say $\ell>400000$.

\begin{lemma} Let $L/K$ be a Galois extension of fields such that Gal$(L/K)$
is cyclic of order $mn$ with $\gcd(m,n)=1$.
Let $F_1$ be the subfield of degree $m$ over $K$ and let $F_2$ be the subfield of degree $n$ over $K$.
Write $F_1=K(\alpha)$ and $F_2=K(\beta)$. 
Then $L=K(\alpha+\beta)$. \end{lemma}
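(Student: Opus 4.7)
The plan is to show $K(\alpha+\beta) \supseteq L$, since the reverse inclusion is obvious. First I would record what the coprimality assumption gives structurally. Setting $G = \mathrm{Gal}(L/K)$ and $H_i = \mathrm{Gal}(L/F_i)$, the hypothesis that $G$ is cyclic of order $mn$ with $\gcd(m,n)=1$ forces the direct product decomposition $G = H_1 \times H_2$, with $|H_1|=n$ and $|H_2|=m$. From this I read off that $F_1 \cap F_2 = K$ and $F_1 F_2 = L$, since the corresponding fixing subgroups are $\langle H_1,H_2\rangle = G$ and $H_1 \cap H_2 = 1$. I also use that $F_1$ and $F_2$ are each Galois over $K$, which is automatic because $G$ is abelian.

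Next I would set $\gamma = \alpha + \beta$ and take an arbitrary $\sigma \in \mathrm{Gal}(L/K(\gamma))$, aiming to show $\sigma = 1$. Decompose $\sigma = \sigma_1\sigma_2$ according to $G = H_1 \times H_2$. Since $\sigma_1$ fixes $\alpha \in F_1$ and $\sigma_2$ fixes $\beta \in F_2$, the equation $\sigma(\gamma) = \gamma$ rearranges to
$$
\sigma_2(\alpha) - \alpha \;=\; \beta - \sigma_1(\beta).
$$
The left-hand side lies in $F_1$ and the right-hand side in $F_2$, so both lie in $F_1 \cap F_2 = K$. Writing $\sigma_2(\alpha) = \alpha + c$ with $c \in K$ and iterating yields $\sigma_2^k(\alpha) = \alpha + kc$ for all $k$; taking $k$ to be the order of $\sigma_2$ gives $kc = 0$, so $c=0$ in characteristic zero. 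Thus $\sigma_2$ fixes $\alpha$, placing $\sigma_2 \in H_2 \cap H_1 = 1$. The symmetric argument gives $\sigma_1 = 1$, and therefore $\sigma = 1$, which is what we wanted.

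The only slightly delicate step is the final passage from $kc=0$ to $c=0$; in characteristic zero (the setting of every application in this paper) it is immediate, and more generally it only needs $\mathrm{char}\,K\nmid mn$. Apart from this, the proof is purely structural, relying on the $H_1\times H_2$ decomposition of the cyclic Galois group and the stability of each $F_i$ under all of $G$. I do not anticipate a serious obstacle; the main thing to get right is the bookkeeping that isolates $\sigma_2(\alpha)-\alpha$ in $F_1$ and $\beta - \sigma_1(\beta)$ in $F_2$, after which the coprimality collapses both into $K$ and forces $\sigma$ to be trivial.
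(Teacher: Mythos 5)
Your proof is correct (given the characteristic caveat you flag yourself), but it takes a genuinely different route from the paper's. The paper argues by degree counting: since $G$ is abelian, $K(\alpha+\beta)/K$ is Galois, and $K(\alpha)(\alpha+\beta)=K(\alpha,\beta)=F_1F_2=L$, so $n=[L:K(\alpha)]=[K(\alpha)(\alpha+\beta):K(\alpha)]$ divides $[K(\alpha+\beta):K]$; symmetrically $m$ divides it, and coprimality forces $[K(\alpha+\beta):K]\ge mn=[L:K]$. That argument is three lines and works in every characteristic. Your argument instead computes directly in $\mathrm{Gal}(L/K(\alpha+\beta))$: the decomposition $\sigma=\sigma_1\sigma_2$ along $G=H_1\times H_2$, the observation that $\sigma_2(\alpha)-\alpha=\beta-\sigma_1(\beta)$ lands in $F_1\cap F_2=K$ (note this step silently uses $\sigma_2(\alpha)\in F_1$, i.e.\ normality of $F_1$, which you do invoke), and the translation trick $\sigma_2^k(\alpha)=\alpha+kc$ forcing $c=0$. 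What your version buys is explicitness and independence from the divisibility property of compositum degrees; what it costs is the hypothesis $\mathrm{char}\,K\nmid mn$, which the paper's proof does not need --- though, as you say, every application in the paper is to subfields of cyclotomic fields, so this restriction is harmless there. Both proofs use the same structural inputs ($F_1\cap F_2=K$, $F_1F_2=L$, all subextensions Galois); the difference is purely in how the coprimality is cashed in.
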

\begin{proof} Since $n=[K(\alpha)(\alpha+\beta):K(\alpha)]$ divides  $[K(\alpha+\beta):K]$, and similarly $m$ divides this
degree, the degree is at least $mn=[L:K]$.\end{proof}

\section*{Acknowledgments} 
The authors would like to thank Niranjan Ramachandran and Jonathan Rosenberg 
for helpful comments.

\end{document}